  \theoremstyle{plain}
  \newtheorem{lem}{\protect\lemmaname}
  \theoremstyle{remark}
  \newtheorem{note}{\protect\notename}
  \theoremstyle{plain}
  \newtheorem{assumption}{\protect\assumptionname}
  \theoremstyle{plain}
  \newtheorem{prop}{\protect\propositionname}
   \newenvironment{proof}[1][\proofname]{\par
     \normalfont\topsep6\p@\@plus6\p@\relax
     \trivlist
     \itemindent\parindent
     \item[\hskip\labelsep
           \scshape
       #1]\ignorespaces
   }{%
     \endtrivlist\@endpefalse
   }
   \providecommand{\proofname}{Proof}
  \theoremstyle{remark}
  \newtheorem{rem}{\protect\remarkname}
  \theoremstyle{plain}
  \newtheorem{thm}{\protect\theoremname}
  \providecommand{\assumptionname}{Assumption}
  \providecommand{\notename}{Note}
\providecommand{\lemmaname}{Lemma}
\providecommand{\propositionname}{Proposition}
\providecommand{\remarkname}{Remark}
\providecommand{\theoremname}{Theorem}
\begin{document}

\title{\textbf{\Large{}Attitude Estimation and Control Using Linear-Like
Complementary Filters: Theory and Experiment}}

\author{L.~Benziane\textsuperscript{}, A.~El Hadri\textsuperscript{},
A.~Seba\textsuperscript{}, A.~Benallegue\textsuperscript{} and
Y. Chitour\textsuperscript{}%
\thanks{This research was partially supported by the iCODE institute, research
project of the Idex Paris-Saclay. L.~Benziane, A.~El Hadri, A.~Seba,
A. Benallegue are with LISV, Université de Versailles Saint Quentin,
France. Y. Chitour is with L2S, Université Paris-Sud XI, CNRS and
Supélec, Gif-sur-Yvette, and Team GECO, INRIA Saclay \textendash{}
Ile-de-France, France, e-mail: \protect\href{mailto:lotfi.benziane@ens.uvsq.fr,  Elhadri@lisv.uvsq.fr, Ali.seba@lisv.uvsq.fr, benalleg@lisv.uvsq.fr, yacine.chitour@lss.supelec.fr}{lotfi.benziane@ens.uvsq.fr,  Elhadri@lisv.uvsq.fr, Ali.seba@lisv.uvsq.fr, benalleg@lisv.uvsq.fr, yacine.chitour@lss.supelec.fr}.%
}}
\maketitle
\begin{abstract}
This paper proposes new algorithms for attitude estimation and control
based on fused inertial vector measurements using linear complementary
filters principle. First, $n$-order direct and passive complementary
filters combined with TRIAD algorithm are proposed to give attitude
estimation solutions. These solutions which are efficient with respect
to noise include the gyro bias estimation. Thereafter, the same principle
of data fusion is used to address the problem of attitude tracking
based on inertial vector measurements. Thus, instead of using noisy
raw measurements in the control law a new solution of control that
includes a linear-like complementary filter to deal with the noise
is proposed. The stability analysis of the tracking error dynamics
based on LaSalle's invariance theorem proved that almost all trajectories
converge asymptotically to the desired equilibrium. Experimental results,
obtained with DIY Quad equipped with the APM2.6 auto-pilot, show the
effectiveness and the performance of the proposed solutions. \end{abstract}

\begin{IEEEkeywords}
Attitude Estimation; Attitude Control; Complementary Filters; Asymptotic
Global Convergence; Almost Global Asymptotic Stability 
\end{IEEEkeywords}

\section{Introduction}

Most of traditional rigid body attitude control approaches given in
the literature are based on feedback scheme using attitude estimation
\textcolor{black}{(see e.g.} \cite{Joshi1995,Thienel2003,Benallegue2008,Tayebi2008,Lee2013}).
Recently, some authors propose to use directly raw vector measurements
to perform attitude control \textcolor{black}{(see e.g. }\cite{Thakur2014b,Benziane2014,Tayebi2013}).
In fact, the explicit use of the attitude in the control law involves
the determination of attitude from measurements provided by appropriate
sensors. It is known that there are no sensors directly measuring
the attitude but it can be determined from measurements in the body
frame using suitable algorithms \textcolor{black}{(see e.g. }\cite{Wahba1965,Shuster1981,markley2000,Pflimlina2007,Batista2011}).
Almost all attitude control applications use measurement data from
embedded Inertial Measurement Units (IMU). The capability of the rigid
body to track desired attitude trajectories depends on the reliability
of these sensors and the quality of measurements related to sensitivity
to noise, bias, etc. To take into account the sensor imperfections,
many techniques of attitude estimation and control were developed.
For instance, as mentioned in the survey paper \cite{Crassidis2007},
the problem of attitude estimation is generally treated in two steps,
estimation of the attitude with raw measurements and filtering. The
most and widely used techniques in this case are based on extended
Kalman filter \cite{Crassidis2007,Jun1999}. Some other techniques
are developed like the nonlinear observer given in \cite{El2009},
or based on unscented filter \cite{Crassidis2003}. Most of these
methods are computationally demanding and some of them, depending
on used attitude representation \cite{Shuster1993}, suffer from topological
limitations, double covering or singularities. Another class of techniques
are based on complementary filters \cite{Euston2008,Vasconcelos2009}
which are not so computationally demanding, see \cite{Higgins1975}
for comparison between complementary and Kalman filtering.

Due to their simplicity and efficiency, the use of complementary filters
to reconstruct the attitude continues to attract many researchers.
A lot of them focus on low-cost IMU and attitude heading reference
system AHRS \cite{Martin2010}. Nonlinear complementary filters designed
on Special Orthogonal Group $SO(3)$ \cite{Mahony2008} and on the
unit 3-sphere $\mathbb{S}^{3}$ \cite{Tayebi2011} were used successfully
to estimate the attitude. Modified complementary filters using only
accelerometer and gyroscope measurements to estimate the orientation
was presented in \cite{Kubelka2012}. Another recent work has used
the inverse sensor models and complementary filters to develop a high-fidelity
attitude estimator \cite{Masuya2012}. As mentioned in \cite{Batista2012},
traditional attitude solutions use directly raw vector measurements
to compute the attitude data after that the observer is used to estimate
the attitude.\textit{ }\cite{Batista2012} proposed a reverse strategy
by combining a vector-based filter with an optimal attitude determination
algorithm, in which the distortion of noise characteristics is avoided.
A new interesting class of globally asymptotically stable filters
for attitude estimation was obtained. The vector-based filter was
designed as a Kalman filter using Linear Time Variant (LTV) representation
of the nonlinear kinematic equation. Even if experimental results
presented in \cite{Batista2012} are very good, the theoretical drawback
is the fact that the observability conclusions were given for the
LTV reformulation of the original nonlinear system and not explicitly
on the non linear system.

Inspired by approach given in \cite{Batista2012}, this paper presents
firstly globally asymptotically stable filters for attitude estimation
based on high order linear complementary filtering. The gyro-bias
estimation is also considered. Two forms of filter, termed \textit{``direct''}
and \textit{``passive''}, are designed similarly as the work presented
in \cite{Mahony2008}. The passive form is less sensitive to noise
as claimed in \cite{Mahony2008}. Moreover, the approach proposed
here is completely deterministic as it is based on linear complementary
filters followed by TRIAD algorithm for the attitude estimation. As
a matter of fact, the TRIAD is the deterministic attitude estimation
algorithm \textit{par excellence }as claimed by \cite{Shuster2006}.
Although it was proved that TRIAD is less accurate than other optimal
approaches \cite{Shuster2006}, we show throughout this work that
it is possible to obtain higher quality of the attitude estimation
when this approach is used.

The quality of IMU measurements is much degraded by the phenomenon
of vibrations of the real system. Frequently, the implementations
of some attitude controllers using directly raw vector measurements
are confronted with this phenomenon. Therefore, we propose to use
a new filter to improve the performance of the attitude tracking controller.
The proposed attitude controller is based on the filtered vector measurements
instead of the raw ones, while ensuring an almost global stability
without using ``attitude measurements''.

The result presented in this paper extends those from \cite{Benziane2012}.
The first contribution of this work is the extension of the global
convergence of the direct complementary filters to the case of n-order.
Also, we propose general n-order passive filters, where we obtain
the global asymptotic convergence to zero of the estimation errors.
This constitutes our second contribution. Another contribution is
the design of a new control law based only on inertial and rate-gyro
measurements to control the attitude of a rigid body without using
``attitude measurements\textquotedblright , for which an almost global
stability is given. All our contributions are validated by experiments
on the DIY drone Quad-copter \cite{3DRa}.

\section{\label{sec:Preliminaries}Preliminaries}

\subsection{Mathematical background and Notations }

Consider a rigid-body moving in 3D space with orthonormal body-frame
$\left\{ \mathcal{B}\right\} $ fixed to its center of gravity and
denote by $\left\{ \mathcal{I}\right\} $ the inertial reference frame
attached to the 3D space\textcolor{black}{. Attitude of the rigid
body represents the relative orientation of the} $\left\{ \mathcal{B}\right\} $\textcolor{black}{{}
with respect to }$\left\{ \mathcal{I}\right\} $\textcolor{black}{.
It }can be represented using several mathematical models\textcolor{black}{.
Representing the attitude by rotation matrix $R$, provides an unique,
global and }non singular\textcolor{black}{{} }parametrization\textcolor{black}{{}
of the orientation \cite{Shuster1993}. The rotation matrix is an
element of the special orthogonal group }$SO(3)$ with\textcolor{black}{{}
}$SO(3)=\{R\in\mathcal{\mathrm{\mathbb{R}}}^{3\times3}\mid R^{T}R=RR^{T}=I_{3},\,\det(R)=1\}$
where $I_{3}$ is the $3\times3$ identity matrix. The associated
Lie algebra denoted by $\mathfrak{so}(3)$ is the set of skew symmetric
matrices such that $\mathfrak{so}(3)=\{A\in\mathcal{\mathrm{\mathbb{R}}}^{3\times3}\mid A=-A^{T}\}$.
Denote by $S$ the Lie algebra mapping from $\mathbb{R}^{3}\rightarrow\mathfrak{so}(3)$
which associates to $x\in\mathbb{R}^{3}$ the skew-symmetric matrix
$S(x)$, such that

\begin{equation}
S(x)=\left[\begin{array}{ccc}
0 & -x_{z} & x_{y}\\
x_{z} & 0 & -x_{x}\\
-x_{y} & x_{x} & 0
\end{array}\right]\; and\; x=\left[\begin{array}{c}
x_{x}\\
x_{y}\\
x_{z}
\end{array}\right]\label{eq:definition of skew sym matrix}
\end{equation}

For any two vectors $x,\, y\in\mathbb{R}^{3}$ and rotation matrix
$R\in SO(3)$, the following identities hold: 
\begin{equation}
\begin{cases}
S(x)y & =x\times y=-S(y)x,\\
S(S(x)y) & =S(x)S(y)-S(y)S(x),\\
S(x)^{2} & =xx^{T}-x^{T}xI_{3},\\
S(Rx) & =RS(x)R^{T},
\end{cases}\label{S(x)y}
\end{equation}
where \textcolor{black}{$\times$ denotes the vector cross product.}

Another global and non singular parametrization\textcolor{black}{{}
of the attitude is described by} unit quaternion $Q$ which is an
element of unit sphere $\mathbb{S}^{3}=\left\{ Q=\left(q_{0},q^{T}\right)^{T}\text{, }q_{0}\in\mathbb{R},\text{\ }q\in\mathbb{R}^{3},\text{ }q_{0}^{2}+q^{T}q=1\right\} $.
The multiplication of two quaternions $P=(p_{0},p^{T})^{T}$ and $Q=(q_{0},q^{T})^{T}$
is denoted by ``$\odot$'' and defined as $P\odot Q=\left[\begin{array}{c}
p_{0}q_{0}-p^{T}q\\
p_{0}q+q_{0}p+p\times q
\end{array}\right]$ and for any unit quaternion $Q=(q_{0},q^{T})^{T}$, we have $Q\odot Q^{-1}=Q^{-1}\odot Q=(1,\mathbf{0})$,
where $Q^{-1}=(q_{0},-q^{T})^{T}$.

Both $Q\in\mathbb{S}^{3}$ and $R\in SO(3)$ are related to each other
through the mapping $\mathcal{R}:\mathbb{S}^{3}\rightarrow SO(3)$
by the Euler-Rodriguez formula as follows: 
\begin{equation}
\begin{array}{c}
\mathcal{R}(Q)=I_{3}+2q_{0}S(q)+2S(q)^{2}\end{array}\label{eq: rodriguez}
\end{equation}

If $n$ is a positive integer, set $e_{n}=(0,\cdots,0,1)^{T}$. To
every $\gamma=(\gamma_{1},\ldots,\gamma_{n})\in\mathbb{R}^{n}$, we
associate the polynomial 
\begin{equation}
P_{\gamma}(s)=s^{n}+\sum_{k=1}^{n}\gamma_{k}s^{n-k},\label{eq:definition of caracteristic polynomial N order}
\end{equation}
and the companion matrix $A_{\gamma}$ 
\begin{equation}
A_{\gamma}=\left(\begin{array}{ccccc}
0 & 1 & 0 & \cdots & 0\\
0 & 0 & 1 & \ddots & \vdots\\
\vdots & \vdots & 0 & \ddots & \vdots\\
\vdots & \vdots & \vdots & \ddots & 0\\
0 & 0 & \cdots & 0 & 1\\
-\gamma_{n} & -\gamma_{n-1} & \cdots & -\gamma_{2} & -\gamma_{1}
\end{array}\right)\label{eq:definition of campagnon matrix n order}
\end{equation}
whose characteristic polynomial is $P_{\gamma}$. Use $\pi:\mathbb{R}^{n}\rightarrow\mathbb{R}^{n-1}$
to denote the projection onto $\mathbb{R}^{n-1}$ i.e., $\pi(\gamma)=(\gamma_{1},\cdots,\gamma_{n-1})$.
Define the following subsets of $\mathbb{R}^{n}$, 
\[
\mathcal{H}_{n}=\left\{ \gamma\in\mathbb{R}^{n}\,\mid\, P_{\gamma}\; Hurwitz\right\} ,\quad\mathcal{\overline{H}}_{n}=\left\{ \gamma\in\mathcal{H}_{n}\,\mid\,\pi(\gamma)\in\mathcal{H}_{n-1}\right\} .
\]
The proof of the following lemma is defereed in Appendix. 
\begin{lem}
\label{note: Hn_bar is not empty}If $n$ is a positive integr, then
$\mathcal{\overline{H}}_{n}$ is not empty. \end{lem}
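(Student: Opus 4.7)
The plan is to pick the first $n-1$ coefficients so that the truncated polynomial $P_{\pi(\gamma)}$ is Hurwitz, and then to choose the last coefficient $\gamma_n>0$ small enough so that $P_\gamma$ itself remains Hurwitz. Since $\mathcal{H}_{n-1}$ is trivially non-empty (e.g.\ the coefficients of $(s+1)^{n-1}$ lie in it), I fix any $\tilde\gamma=(\gamma_{1},\ldots,\gamma_{n-1})\in\mathcal{H}_{n-1}$ and set $\gamma=(\tilde\gamma,\gamma_{n})\in\mathbb{R}^{n}$; the condition $\pi(\gamma)\in\mathcal{H}_{n-1}$ is then automatic and the only remaining task is to pick $\gamma_{n}$ so that $P_{\gamma}$ is Hurwitz.

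The key observation is the identity
$$
P_{\gamma}(s)=s\,P_{\tilde\gamma}(s)+\gamma_{n},
$$
so that for $\gamma_{n}=0$ the roots of $P_{\gamma}$ consist of the roots of $P_{\tilde\gamma}$ (all lying in the open left half-plane by assumption) together with a simple root at $s=0$. By continuous dependence of polynomial roots on their coefficients, the roots of $P_{\gamma}$ may be labeled $s_{0}(\gamma_{n}),s_{1}(\gamma_{n}),\ldots,s_{n-1}(\gamma_{n})$ with $s_{0}(0)=0$ and $s_{i}(0)$ equal to the $i$-th root of $P_{\tilde\gamma}$ for $i\geq1$. For $|\gamma_{n}|$ small, the roots $s_{1},\ldots,s_{n-1}$ remain in the open left half-plane by openness, and the implicit function theorem applied to $F(s,\gamma_{n}):=s\,P_{\tilde\gamma}(s)+\gamma_{n}$ at $(0,0)$, where $\partial_{s}F=P_{\tilde\gamma}(0)=\gamma_{n-1}$, yields $s_{0}(\gamma_{n})=-\gamma_{n}/\gamma_{n-1}+O(\gamma_{n}^{2})$. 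Since every Hurwitz polynomial has strictly positive coefficients, $\gamma_{n-1}>0$, so any sufficiently small $\gamma_{n}>0$ places $s_{0}(\gamma_{n})$ strictly in the open left half-plane; hence $P_{\gamma}$ is Hurwitz and $\gamma\in\overline{\mathcal{H}}_{n}$.

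The main (mild) obstacle is the perturbation analysis near the simple root at the origin; everything else is bookkeeping. The argument relies only on two elementary facts, namely the continuous dependence of simple roots on the coefficients of a monic polynomial, and the positivity of the constant term of a Hurwitz polynomial, both of which are standard. An alternative, essentially equivalent route is to apply the Routh--Hurwitz criterion directly to the augmented polynomial, which at leading order in $\gamma_{n}$ reduces to the Routh--Hurwitz conditions on $P_{\tilde\gamma}$ together with $\gamma_{n}>0$.
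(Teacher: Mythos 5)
Your proof is correct, but it runs in the opposite direction from the paper's. The paper works top--down: it exhibits the single explicit example $P_{\gamma}(s)=(s+\alpha)^{n}$, observes that $P_{\pi(\gamma)}(s)=(P_{\gamma}(s)-P_{\gamma}(0))/s$, and checks by direct computation that the nonzero roots of $(s+\alpha)^{n}-\alpha^{n}$, namely $z=\alpha(e^{2k\pi j/n}-1)$ with $k\neq 0$, all satisfy $\mathrm{Re}(z)=\alpha(\cos(2k\pi/n)-1)<0$. You work bottom--up: starting from an arbitrary $\tilde\gamma\in\mathcal{H}_{n-1}$, you use the same factorization $P_{\gamma}(s)=sP_{\tilde\gamma}(s)+\gamma_{n}$ but invoke continuity of roots plus a first-order perturbation of the simple root at the origin (with $\partial_{s}F(0,0)=P_{\tilde\gamma}(0)>0$ by positivity of Hurwitz coefficients) to conclude that appending any sufficiently small $\gamma_{n}>0$ keeps the polynomial Hurwitz. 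Your argument proves strictly more --- that $\overline{\mathcal{H}}_{n}$ contains a nonempty open ``collar'' $\{(\tilde\gamma,\gamma_{n}):\tilde\gamma\in\mathcal{H}_{n-1},\ 0<\gamma_{n}<\varepsilon(\tilde\gamma)\}$ over all of $\mathcal{H}_{n-1}$ --- at the cost of relying on continuity of roots and the implicit function theorem, whereas the paper's example is fully explicit and needs only the computation of $n$-th roots of unity. The only blemish is the degenerate case $n=1$, where $\gamma_{n-1}$ is undefined and one should read $P_{\tilde\gamma}\equiv 1$; there the claim is trivial ($\overline{\mathcal{H}}_{1}=\mathcal{H}_{1}=(0,\infty)$), so this does not affect correctness.
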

\begin{note}
\label{note: Hurwitz kronecker product matrix} Let $E\in\mathbb{R}^{(n\times n)}$
and $\sigma(E)=\left\{ \lambda_{1},\ldots,\lambda_{n}\right\} $ its
spectrum, where $\lambda_{l},\, l=1\ldots n$ are the eigenvalues
of $E$. Let $I_{k}\in\mathbb{R}^{(k\times k)}$, $k$ integer, be
the identity matrix. Then, the spectrum of the Kronecker product of
$E$ by $I_{k}$, $E\otimes I_{k}\in\mathbb{R}^{(kn\times kn)}$,
is equal to $\sigma(E)$ according to Theorem in page 245 of \cite{Horn1991}.
In particular, $E\otimes I_{k}$ is Hurwitz if and only $E$ is. 
\end{note}

\subsection{\label{sub:Attitude-kinematics,-Dynamics, assumptions}Attitude kinematics,
Dynamics and Assumptions}

\textcolor{black}{The rigid body rotational motion can be described
by its kinematic and dynamic equations. Using the rotation matrix
representation, the rigid body }attitude is governed by the following
kinematic equation 
\begin{equation}
\dot{R}(t)=R(t)S(\omega(t)),\label{eq: attitude kinematics R}
\end{equation}
\textcolor{black}{where }$\omega(t)$\textcolor{black}{{} being the
angular velocity of the rigid body expressed in }$\left\{ \mathcal{B}\right\} $.
Equivalently in term of unit quaternion, we can have $\dot{Q}(t)=\frac{1}{2}Q(t)\odot\bar{\omega}(t)$
with $\bar{\omega}(t)$ is the pure quaternion defined by $\bar{\omega}(t)=(0,\omega(t)^{T})^{T}$,
which gives

\begin{equation}
\dot{Q}(t)=\left[\begin{array}{c}
\dot{q}_{0}(t)\\
\dot{q}(t)
\end{array}\right]=\left[\begin{array}{c}
-\frac{1}{2}q^{T}(t)\omega(t)\\
\frac{1}{2}(q_{0}(t)I_{d}+S(q(t)))\omega(t)
\end{array}\right],\label{eq:real kinematics}
\end{equation}
Now, given a constant vector $r$ in inertial $\left\{ \mathcal{I}\right\} $,
then its \textcolor{black}{corresponding} vector in the $\left\{ \mathcal{B}\right\} $
is given by $b(t)=R^{T}(t)r$. Thus, using (\ref{eq: attitude kinematics R}),
one can get the following reduced attitude kinematics

\begin{equation}
\dot{b}(t)=-S(\omega(t))b(t)\label{dynamic_bi}
\end{equation}
By considering applied torque $\tau(t)$ to the system expressed in
$\left\{ \mathcal{B}\right\} $, the rigid body simplified rotational
dynamics is governed by 
\begin{equation}
\begin{array}{c}
J\dot{\omega}(t)=-S(\omega(t))J\omega(t)+\tau(t),\end{array}\label{eq. dynamic de rotation}
\end{equation}
where $J\in\mathbb{R}^{3\times3}$ is a symmetric positive definite
constant inertia matrix of the rigid body with respect to $\left\{ \mathcal{B}\right\} $.

Consider the following rate-gyros model 
\begin{equation}
\omega_{m}(t)=\omega(t)+\eta,\label{eq:Rate gyro model}
\end{equation}
where $\omega_{m}(t)$ is the measured angular velocity and $\eta$
is the real unknown gyro-bias.

Along this work, we use the following assumptions : 
\begin{assumption}
\label{assump:Colinearity of bi}We assume that, if we have $m$ measured
vectors $b_{i}(t),\, i=1,...,m$ expressed in $\left\{ \mathcal{B}\right\} $,
corresponding to $m$ inertial constant vectors $r_{i},\, i=1,...,m$
expressed in $\left\{ \mathcal{I}\right\} $, then at least two of
them are non-collinear. 
\end{assumption}

\begin{assumption}
\label{assump:boundness omega omega dot} We assume that the real
unknown gyro-bias $\eta$ is bounded and constant (or slowly varying),
such that $\dot{\eta}=0$. Moreover, we assume that we are dealing
with bounded measured angular velocities $\omega_{m}(\cdot)$, implying
that the real angular velocity $\omega(\cdot)$ is bounded as well. 
\end{assumption}
Using the reduced attitude kinematics (\ref{dynamic_bi}) and the
model of the rate-gyro (\ref{eq:Rate gyro model}), we can write the
following system 
\begin{equation}
\begin{cases}
\begin{array}{lcl}
\dot{b}_{i} & = & -S(\omega_{m}-\eta)b_{i}\\
\dot{\eta} & = & 0.
\end{array}\end{cases},\label{eq: real bi dynamics}
\end{equation}
where $i=1,...,m$. Note that, in all what follows the indices $i=1,...,m$
denote the number of the used inertial vectors.

\subsection{Complementary linear filter-based attitude estimation approach}

The sensor-based attitude estimation approach \cite{Batista2012}
is consisting of two processes: i) filtering sensor measurements,
and ii) determining attitude. Inspired by this approach, we propose
a structure based on complementary linear filter rather than sensor-based
filter method. Indeed, complementary filters give us a mean to fuse
multiple heterogeneous independent noisy measurements of the same
signal that have complementary spectral characteristics \cite{Mahony2008}.
By developing a high-fidelity and simple algorithm for attitude estimation,
the proposed structure must allow the possibility of using high order
filter which leads to better performance.

Using the reduced attitude kinematics (\ref{dynamic_bi}), the complementary
filter model for fusing the measured inertial vector $b_{i}(t)$ and
gyros measurements $\omega_{m}$ in order to get estimate $\hat{b}_{i}(t)$
is shown in Figure \ref{fig:Complementary classical form}, where
the notion of complementary filter is achieved if the following condition
is satisfied 
\begin{equation}
H_{1i}(s)+sH_{2i}(s)=1,\quad i=1,\cdots,m,\label{com_filter_eq}
\end{equation}
where $H_{1i}(s)$ is a low-pass filter and $sH_{2i}(s)$ is a high-pass
filter.

\begin{center}
\begin{figure}[th]
\centering{}\includegraphics[width=9cm]{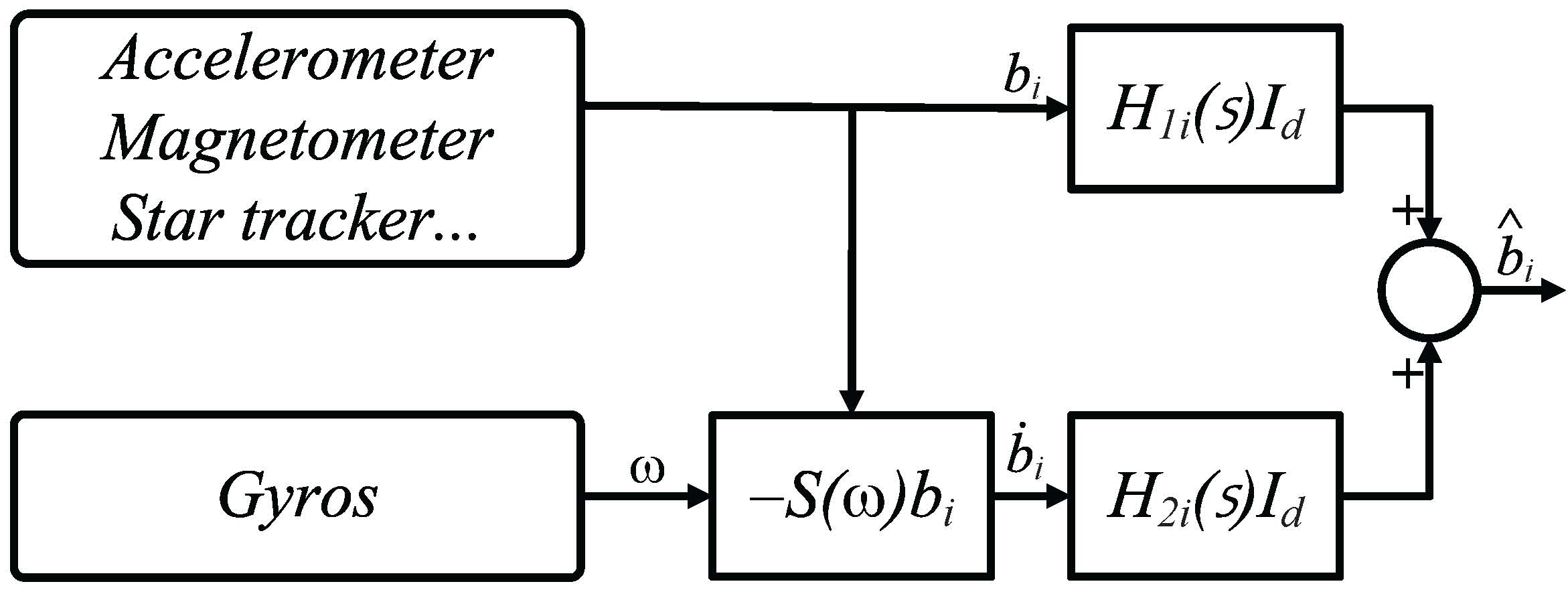}\protect\caption{\label{fig:Complementary classical form}Classical form of complementary
filter}
\end{figure}

\par\end{center}

From the structure of the complementary filter given in Figure \ref{fig:Complementary classical form},
the estimate $\hat{b}_{i}$ of the state $b_{i}$ by fusing measurements
of $i\, th$ inertial direction vector and gyro measurements can be
write as 
\begin{equation}
\hat{b}_{i}=H_{1i}(s)b_{i}+H_{2i}(s)\dot{b}_{i},\quad i=1,\cdots,m.\label{eq:b_hat_fromCF}
\end{equation}

Now, for the determination of the attitude, the complementary filter
can will be followed by a TRIAD algorithm \cite{Shuster1981}. Despite
the fact that TRIAD is known less accurate than other statistical
algorithms based on minimizing Wahba's loss function \cite{Shuster2006},
we will show that we can obtain good results by using fused data.
The choice of TRIAD algorithm is justified by the fact that optimal
algorithms are usually much slower than deterministic algorithms \cite{Shuster1981,Shuster2006}.

The first problem addressed in this work is the design of an attitude
and heading reference system using the concept of sensor-based attitude
estimation approach \cite{Batista2012}. The goal is to proof that
it is possible to obtain a structure based on complementary linear
filter with a globally asymptotic convergence. The filtered data will
be used by a TRIAD for attitude determination as explained before.

The second problem addressed is to proof that the use of estimated
measurements by complementary filters can achieve attitude tracking
with an almost global stability.

\section{\label{sec:Attitude-and-Gyro-Bias}Design of High Order Direct and
Passive Filters with Gyro-Bias Estimation}

The principle of the \textit{``classical form''} of complementary
filters is based on the data fusion of measurements of inertial direction
vectors and gyro measurements as depicted by the scheme of Figure
\ref{fig:Complementary classical form}. This scheme can be reformulated
in \textit{``feedback form''} as shown by Figure \ref{fig:Direct-form-of}.
Furthermore, according to the manner of offsetting the nonlinear term,
we can obtain two structures of the complementary filter. The first
one is termed \textit{``direct linear complementary filter''} and
the second one termed \textit{``passive linear-like complementary
filter''} . Indeed, in the first one, the offsetting of nonlinear
term uses direct raw measurements as shown in Figure \ref{fig:Direct-form-of}
while in the second one, the filtered measurements are used as depicted
in Figure \ref{fig:Passive-linear-complementary}.

\begin{center}
\begin{figure}[t]
\centering{}\includegraphics[width=9cm]{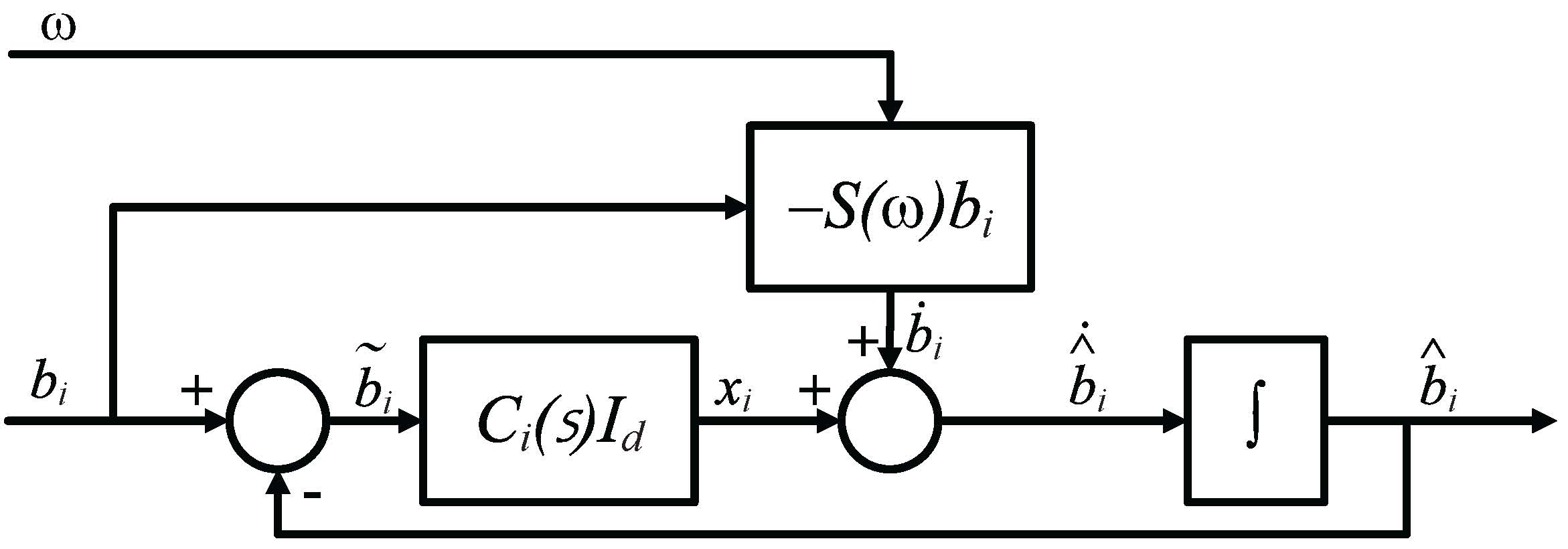}\protect\caption{\label{fig:Direct-form-of}Direct linear complementary filter}
\end{figure}

\par\end{center}

\noindent \begin{center}
\begin{figure}[th]
\centering{}\includegraphics[width=9cm]{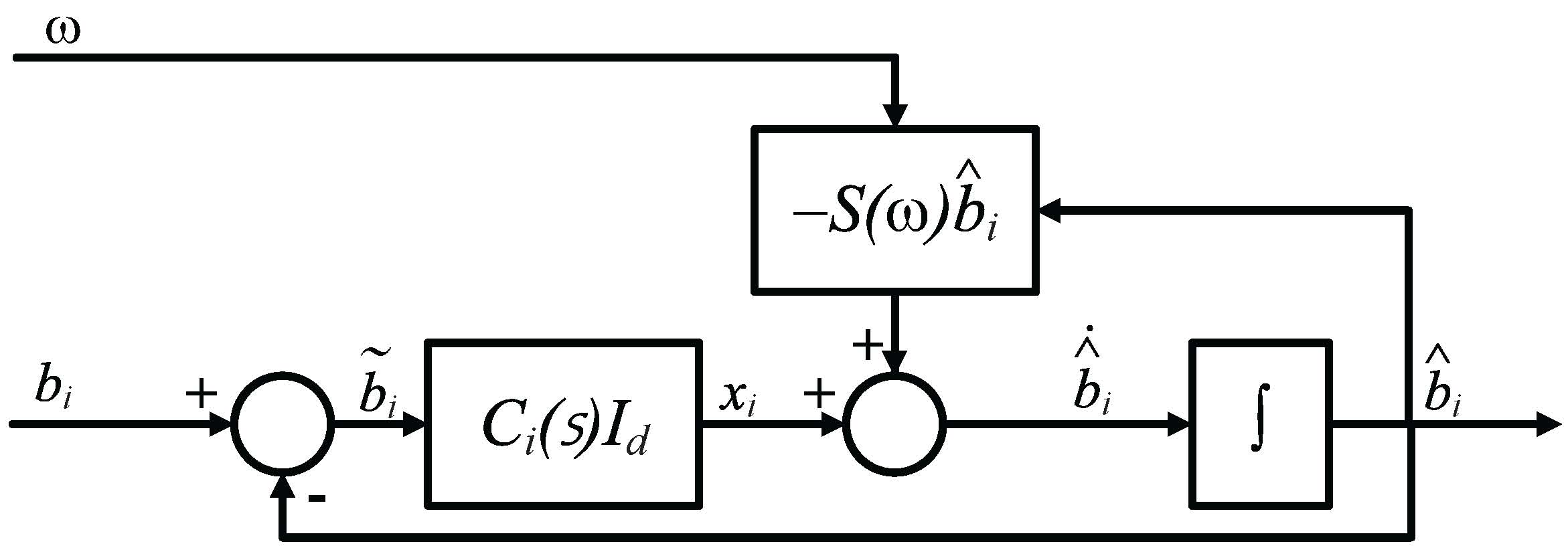}\protect\caption{\label{fig:Passive-linear-complementary}Passive linear-like complementary
filter}
\end{figure}

\par\end{center}

From the equivalence between the \textit{``classical form''} and
the \textit{``feedback form''}, one can get 
\begin{equation}
H_{1i}\left(s\right)=\frac{C_{i}\left(s\right)}{s+C_{i}\left(s\right)},\; H_{2i}\left(s\right)=\frac{1}{s+C_{i}\left(s\right)},\, i=1,...,m,\label{tf_H1_H2}
\end{equation}
where $C_{i}(s)$ represents the compensator term in the feedback
form. From (\ref{tf_H1_H2}), we can write the compensator term as

\begin{equation}
C_{i}(s)=\frac{sH_{1i}(s)}{1-H_{1i}(s)},\, i=1,...,m.\label{direct_filter_C}
\end{equation}
The design of the compensator $C_{i}(s)$ can be achieved by choosing
the adequate filter order for improving the quality of estimation.
Consider now, for $i=1\cdots,m,$ the general $n$-order transfer
function $H_{1i}(s)$ by first taking $\varUpsilon_{i}\in\mathcal{H}_{n}$
and setting 
\begin{equation}
H_{1i}(s)=\frac{\gamma_{in}}{P_{\varUpsilon_{i}}(s)},\label{eq:definition of H1 in n-order form}
\end{equation}
where $P_{\varUpsilon_{i}}(s)$ and $\gamma_{in}$ are defined by
(\ref{eq:definition of caracteristic polynomial N order}). Using
(\ref{direct_filter_C}) one can get 
\begin{equation}
C_{i}(s)=\frac{\gamma_{in}}{P_{\varPi(\varUpsilon_{i})}(s)},\, i=1,...,m.\label{eq: transfert_func_direct_n}
\end{equation}

\subsection{High-Order Direct Linear Complementary Filters}

Consider System (\ref{eq: real bi dynamics}) and the block diagram
of the direct form in Figure \ref{fig:Direct-form-of} with compensator
$C_{i}(s)$ given by (\ref{eq: transfert_func_direct_n}) for $i=1,...,m$.
Then, the closed-loop dynamics with gyro bias estimation for any $n$-order
is given for $i=1,...,m$ by

\begin{equation}
\begin{cases}
\begin{array}{lcl}
x_{i}^{(n-1)} & = & -\sum_{k=1}^{n-1}\gamma_{ik}x_{i}^{(n-k-1)}+\gamma_{in}(b_{i}-\hat{b}_{i}),\\
\dot{\hat{b}}_{i} & = & -S(\omega_{m}-\hat{\eta})b_{i}+x_{i},\\
\dot{\hat{\eta}} & = & \Gamma_{d}\sum_{i=1}^{m}S(b_{i})\upsilon_{i}
\end{array}\end{cases},\label{Eq. direct n ordre case}
\end{equation}
where $x_{i}^{(j)}$ is the $j-$th derivative of $x_{i}$ with $x_{i}^{(0)}=x_{i}$,
$\gamma_{ik},\, i=1,...,m,\, k=1,...,n$ are components of $\varUpsilon_{i}\in\mathcal{H}_{n}$,
$\Gamma_{d}$ is a real positive definite diagonal matrix gain and
$\upsilon_{i}$ is a vector to be defined later.

Define the observation errors 
\begin{eqnarray}
\widetilde{b}_{i} & = & b_{i}-\hat{b}_{i},\, i=1,...,m,\label{eq: bi tild error definition}\\
\widetilde{\eta} & = & \eta-\hat{\eta},\label{eq:eta tild error definition}
\end{eqnarray}
then using (\ref{dynamic_bi}) and (\ref{Eq. direct n ordre case})-(\ref{eq:eta tild error definition}),
yield the following error dynamics

\begin{equation}
\begin{cases}
\begin{array}{lcl}
x_{i}^{(n-1)} & = & -\sum_{k=1}^{n-1}\gamma_{ik}x_{i}^{(n-k-1)}+\gamma_{in}\tilde{b}_{i},\\
\dot{\widetilde{b}}_{i} & = & -S(b_{i})\widetilde{\eta}-x_{i},\\
\dot{\widetilde{\eta}} & = & -\Gamma_{d}\sum_{i=1}^{m}S(b_{i})\upsilon_{i}.
\end{array}\end{cases}\label{Eq. err direct n order model}
\end{equation}
By the evaluation of the time derivative of the first equation of
(\ref{Eq. err direct n order model}), one can rewrite (\ref{Eq. err direct n order model})
as

\begin{equation}
\begin{cases}
\begin{array}{lcl}
x_{i}^{(n)} & = & -\sum_{k=1}^{n}\gamma_{ik}x_{i}^{(n-k)}-\gamma_{in}S(b_{i})\widetilde{\eta}\\
\dot{\widetilde{\eta}} & = & -\Gamma_{d}\sum_{i=1}^{m}S(b_{i})\upsilon_{i}
\end{array}\end{cases}\label{Eq. err direct n order model-1}
\end{equation}

Now, consider the new state vector $z_{i}\in\mathbb{R}^{3n},\, i=1,...,m$
such as $z_{i}^{T}=\left[x^{T},\dot{x}^{T},\cdots,x^{(n-1)T}\right]$
and define the vectors $\upsilon_{i}$ to be 
\begin{equation}
\upsilon_{i}=B_{di}^{T}P_{di}z_{i},\, i=1,...,m.\label{eq: definition du vecteur vi}
\end{equation}
One can rewrite (\ref{Eq. err direct n order model-1}) as 
\begin{equation}
\begin{cases}
\begin{array}{lcl}
\dot{z}_{i}(t) & = & A_{di}z_{i}(t)+B_{di}S(\widetilde{\eta})b_{i},\\
\dot{\widetilde{\eta}} & = & -\Gamma_{d}\sum_{i=1}^{m}S(b_{i})B_{di}^{T}P_{di}z_{i},
\end{array}\end{cases}\label{Eq. err direct n order model-1-1}
\end{equation}
where $i=1,...,m$, the Hurwitz matrices $A_{di}=A_{\varUpsilon_{i}}\otimes I_{d}\in\mathbb{R}^{(3n\times3n)}$
($A_{\varUpsilon_{i}}$ is defined by (\ref{eq:definition of campagnon matrix n order})),
$B_{di}=\gamma_{in}e_{n}\otimes I_{3}\in\mathbb{R}^{3n\times3}$ and
the matrices $P_{di}\in\mathbb{R}^{(3n\times3n)},\, i=1,...,m$, are
real symmetric positive definite solutions of the following Lyapunov
equations for given symmetric positive definite matrices $Q_{di}$
\begin{equation}
A_{di}^{T}P_{di}+P_{di}A_{di}=-Q_{di},\, i=1,...,m\label{eq:Lyapunov equation Pdi}
\end{equation}

We can now state our first result. 
\begin{prop}
\label{prop: High order Direct proposition}Consider the filter (\ref{Eq. direct n ordre case})
and (\ref{eq: definition du vecteur vi}), under Assumptions \ref{assump:Colinearity of bi}
and \ref{assump:boundness omega omega dot} in subsection \ref{sub:Attitude-kinematics,-Dynamics, assumptions}.
Then the errors (\ref{eq: bi tild error definition}) and (\ref{eq:eta tild error definition})
converge globally asymptotically to zero.\end{prop}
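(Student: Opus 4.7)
My plan is to use a quadratic Lyapunov function on the lifted state and then combine Barbalat's lemma with the non-collinearity assumption to obtain a persistency-of-excitation-type conclusion on the bias error. Specifically, I would take
$$
V = \sum_{i=1}^{m} z_i^T P_{di} z_i + \tilde{\eta}^T \Gamma_d^{-1} \tilde{\eta}.
$$
Differentiating along (\ref{Eq. err direct n order model-1-1}) and using the Lyapunov equations (\ref{eq:Lyapunov equation Pdi}), the quadratic terms give $-\sum z_i^T Q_{di} z_i$. The cross terms cancel exactly thanks to the design $\upsilon_i = B_{di}^T P_{di} z_i$ and the identity $S(\tilde{\eta}) b_i = -S(b_i)\tilde{\eta}$ from (\ref{S(x)y}), yielding $\dot V = -\sum_{i=1}^{m} z_i^T Q_{di} z_i \le 0$. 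Hence $V$ is non-increasing, so $z_i$ and $\tilde{\eta}$ are bounded and $z_i \in L^2$. Note that $A_{di}$ being Hurwitz (via Note \ref{note: Hurwitz kronecker product matrix}) is what makes the Lyapunov equation solvable with $P_{di} > 0$.

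Next I would upgrade $L^2$ integrability to pointwise convergence. Since $\|b_i\| = \|r_i\|$ is constant, $\omega_m$ is bounded (Assumption \ref{assump:boundness omega omega dot}), and $z_i, \tilde{\eta}$ are bounded, the right-hand side of (\ref{Eq. err direct n order model-1-1}) is bounded, so $z_i^T Q_{di} z_i$ is uniformly continuous; Barbalat's lemma then gives $z_i(t) \to 0$. Differentiating once more shows $\ddot z_i$ is bounded, so $\dot z_i$ is uniformly continuous, and a standard Barbalat-type argument (if $z_i \to 0$ and $\dot z_i$ is UC, then $\dot z_i \to 0$) forces $\dot z_i \to 0$. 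Inserting this back into (\ref{Eq. err direct n order model-1-1}) gives $B_{di} S(\tilde{\eta}(t)) b_i(t) \to 0$, and because $B_{di} = \gamma_{in} e_n \otimes I_3$ has full column rank, it yields $S(b_i(t))\tilde{\eta}(t) \to 0$ for every $i$.

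To conclude that $\tilde{\eta} \to 0$, I would exploit Assumption \ref{assump:Colinearity of bi} through the identity
$$
\sum_{i=1}^{m} S(b_i)^T S(b_i) = R(t)^T \Bigl[\sum_{i=1}^{m} \bigl(\|r_i\|^2 I_3 - r_i r_i^T\bigr)\Bigr] R(t),
$$
which follows from $b_i = R^T r_i$ and (\ref{S(x)y}). Since two of the inertial vectors $r_i$ are non-collinear, the bracketed matrix is positive definite, so $\sum \|S(b_i)\tilde{\eta}\|^2 \ge \lambda_{\min} \|\tilde{\eta}\|^2$ uniformly in $t$, whence $\tilde{\eta}(t) \to 0$. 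Finally, the first equation of (\ref{Eq. err direct n order model}) expresses $\tilde{b}_i$ as an explicit linear combination of the components of $z_i$, so $\tilde{b}_i \to 0$ as well.

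The main obstacle is the bias-convergence step: the system is non-autonomous through $b_i(t)$, so LaSalle's invariance principle does not apply directly and one cannot merely identify the largest invariant set in $\{z_i = 0\}$. One has to chain Barbalat twice to move from $z_i \to 0$ to $\dot z_i \to 0$, and then convert $S(b_i)\tilde{\eta} \to 0$ into $\tilde{\eta} \to 0$ via the uniform positive-definiteness observation above; this last step is the place where the non-collinearity hypothesis is used in an essential way.
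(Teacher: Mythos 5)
Your proposal is correct and follows essentially the same route as the paper: the same Lyapunov function $V_1=\sum_i z_i^TP_{di}z_i+\widetilde{\eta}^T\Gamma_d^{-1}\widetilde{\eta}$, the same cross-term cancellation by design of $\upsilon_i$, and the same double application of Barbalat's lemma to pass from $z_i\to0$ to $\dot z_i\to0$ and hence $S(b_i)\widetilde{\eta}\to0$. Your only addition is to make explicit, via the identity $\sum_i S(b_i)^TS(b_i)=R^T\bigl[\sum_i(\|r_i\|^2I_3-r_ir_i^T)\bigr]R$ and its uniform positive definiteness under Assumption \ref{assump:Colinearity of bi}, the final step that the paper states without detail.
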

\begin{proof}
Consider the following Lyapunov function candidate 
\begin{equation}
V_{1}=\sum_{i=1}^{m}z_{i}^{T}P_{di}z_{i}+\widetilde{\eta}^{T}\Gamma_{d}^{-1}\widetilde{\eta}\label{Lyap1}
\end{equation}
where $P_{di}\in\mathbb{R}^{(3n\times3n)},\, i=1,...,m$ is given
by (\ref{eq:Lyapunov equation Pdi}). The time derivative of (\ref{Lyap1})
in view of (\ref{Eq. err direct n order model-1-1}) is given by 
\[
\begin{array}[t]{ccl}
\dot{V}_{1} & = & \sum_{i=1}^{m}\left(z_{i}^{T}P_{di}\dot{z}_{i}+\dot{z}_{i}^{T}P_{di}z_{i}\right)+\widetilde{\eta}^{T}\Gamma_{d}^{-1}\dot{\widetilde{\eta}},\\
 & = & \sum_{i=1}^{m}\left(z_{i}^{T}\left(A_{di}^{T}P_{di}+P_{di}A_{di}\right)z_{i}+2z_{i}^{T}P_{di}B_{di}S(\widetilde{\eta})b_{i}\right)-2\widetilde{\eta}^{T}\sum_{i=1}^{m}S(b_{i})B_{di}^{T}P_{di}z_{i},
\end{array}
\]
using (\ref{eq:Lyapunov equation Pdi}) and the fact that $\widetilde{\eta}^{T}S(b_{i})B_{di}^{T}P_{di}z_{i}=z_{i}^{T}P_{di}B_{di}S(\widetilde{\eta})b_{i}$,
then 
\begin{equation}
\dot{V}_{1}=-\sum_{i=1}^{m}z_{i}^{T}Q_{di}z_{i}\leqslant0.\label{eq:New Lyap func}
\end{equation}
Therefore $z_{i}$ and $\widetilde{\eta}_{i}$ are bounded and consequently
by using (\ref{Eq. err direct n order model-1-1}), $\dot{z}_{i}$
and $\dot{\widetilde{\eta}}_{i}$ are bounded. The evaluation of the
second derivative of (\ref{Lyap1}) in view of (\ref{Eq. err direct n order model-1-1})
gives 
\begin{eqnarray}
\ddot{V}_{1} & = & -\sum_{i=1}^{m}z_{i}^{T}\left(A_{di}^{T}Q_{di}+Q_{di}A_{di}\right)z_{i}+2z_{i}^{T}Q_{di}B_{di}S(b_{i})\widetilde{\eta},\label{dt_dt_Lyap}
\end{eqnarray}
which is clearly bounded. By Barbalat's lemma, ${\displaystyle \lim_{t\rightarrow\infty}\dot{V}_{1}(t)=0}$
and consequently ${\displaystyle \lim_{t\rightarrow\infty}z_{i}(t)=0}$.
Then, according to (\ref{Eq. err direct n order model}), one can
obtain ${\displaystyle \lim_{t\rightarrow\infty}\widetilde{b}_{i}(t)=0}$.
The second time derivative of $z_{i}$ is given by 
\begin{equation}
\ddot{z}_{i}=A_{di}\left(A_{di}z_{di}(t)+B_{di}S(b_{i})\widetilde{\eta}\right)+B_{di}S(S(b_{i})\omega)\widetilde{\eta}+B_{di}S(b_{i})\dot{\widetilde{\eta}},\label{bpp}
\end{equation}
where all terms are bounded. Thus using Barbalat's lemma, ${\displaystyle \lim_{t\rightarrow\infty}\dot{z}_{i}(t)=0}$.
Therefore, using (\ref{Eq. err direct n order model-1-1}) and ${\displaystyle \lim_{t\rightarrow\infty}z_{i}(t)=0}$,
one can conclude that $B_{di}S(b_{i})\widetilde{\eta}$ converge to
zero and equivalently ${\displaystyle \lim_{t\rightarrow\infty}S(b_{i}(t))\widetilde{\eta}(t)=0}$.
Under Assumption \ref{assump:Colinearity of bi}, one can conclude
that ${\displaystyle \lim_{t\rightarrow\infty}\widetilde{\eta}(t)=0}$. \end{proof}
\begin{rem}
Substituting the value of $n$ by 1 in (\ref{Eq. direct n ordre case}),
and after some manipulations, one can obtain the first order direct
filter as 
\begin{equation}
\begin{cases}
\begin{array}{lcl}
\dot{\hat{b}}_{i} & = & -S(\omega_{m}-\hat{\eta})b_{i}+\gamma_{i1}(b_{i}-\hat{b}_{i})\\
\dot{\hat{\eta}} & = & \Gamma_{1}\sum_{i=1}^{m}S(b_{i})\hat{b}_{i}
\end{array}\end{cases},\label{Eq. direct n ordre case-1}
\end{equation}

\end{rem}

\subsection{High-Order Passive Linear-like Filters}

In the passive form, the design of the complementary filter is performed
by injecting filtered measurements for offsetting nonlinear term as
shown in block diagram of Figure \ref{fig:Passive-linear-complementary}
with a compensator $C_{i}(s)$, $i=1,\cdots,m$, defined by (\ref{eq: transfert_func_direct_n}).
Then, we propose the following new $n$-order passive form with gyro
bias estimation

\begin{equation}
\begin{cases}
\begin{array}{lcl}
x_{i}^{(n-1)} & = & -\sum_{k=1}^{n-1}\gamma_{ik}x_{i}^{(n-k-1)}+\gamma_{in}(b_{i}-\hat{b}_{i}),\\
\dot{\hat{b}}_{i} & = & -S(\omega_{m}-\hat{\eta})\hat{b}_{i}+w_{i},\\
\dot{\hat{\eta}} & = & -\Gamma_{p}\sum_{i=1}^{m}S(b_{i})\hat{b}_{i},
\end{array}\end{cases},\label{Eq. passive n ordre case}
\end{equation}
where $i=1,...,m$, $x_{i}^{(j)}$ is the $j\, th$ order derivative
of $x_{i}$ with $x_{i}^{(0)}=x_{i}$, $\gamma_{ik},\, i=1,...,m,\, k=1,...,(n-1)$
are components of $\pi(\varUpsilon_{i})$ for $\varUpsilon_{i}\in\mathcal{\overline{H}}_{n}$,
$\Gamma_{p}$ is a real positive definite diagonal matrix gain and
$w_{i}$ are given by 
\begin{equation}
w_{i}=B_{pi}^{T}P_{pi}X_{i},\label{eq:definition of w_i}
\end{equation}
with $X_{i}\in\mathbb{R}^{3(n-1)},\, i=1,...,m$ such as $X_{i}^{T}=\left[x^{T},\dot{x}^{T},\cdots,x^{(n-2)T}\right]$,
allowing to rewrite (\ref{Eq. passive n ordre case}) as

\begin{equation}
\begin{cases}
\begin{array}{lcl}
\dot{X}_{i}(t) & = & A_{pi}X_{i}(t)+B_{pi}(b_{i}-\hat{b}_{i}),\\
\dot{\hat{b}}_{i} & = & -S(\omega_{m}-\hat{\eta})\hat{b}_{i}+B_{pi}^{T}P_{pi}X_{i},\\
\dot{\hat{\eta}} & = & -\Gamma_{p}\sum_{i=1}^{m}S(b_{i})\hat{b}_{i},
\end{array}\end{cases}\label{Eq. Vectoriel definition of passive n order model}
\end{equation}
where the Hurwitz matrices $A_{pi}=A_{\varPi(\varUpsilon_{i})}\otimes I_{d}\in\mathbb{R}^{(3(n-1)\times3(n-1))}$
($A_{\varPi(\varUpsilon_{i})}$ is defined by (\ref{eq:definition of campagnon matrix n order})),
see Note \ref{note: Hurwitz kronecker product matrix} for $A_{pi}$
Hurwitz) and the matrices $B_{pi}=\gamma_{in}e_{(n-1)}\otimes I_{d}\in\mathbb{R}^{3(n-1)\times3}$
and the matrices $P_{pi}\in\mathbb{R}^{(3(n-1)\times3(n-1))},\, i=1,...,m$,
are real symmetric positive definite solutions of the following Lyapunov
equations for given symmetric positive definite matrices $Q_{pi}$
\begin{equation}
A_{pi}^{T}P_{pi}+P_{pi}A_{pi}=-Q_{pi},\label{eq:Lyapunov equation Ppi}
\end{equation}

We now state our second result. 
\begin{prop}
\label{proposition: High order passive filter} Consider the filter
(\ref{Eq. passive n ordre case}), under Assumptions \ref{assump:Colinearity of bi}
and \ref{assump:boundness omega omega dot} in subsection \ref{sub:Attitude-kinematics,-Dynamics, assumptions}.
Then the errors (\ref{eq: bi tild error definition}) and (\ref{eq:eta tild error definition})
converge globally asymptotically to zero.\end{prop}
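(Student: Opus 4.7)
The plan is to mirror the direct-case argument of Proposition~\ref{prop: High order Direct proposition} but with a Lyapunov function enlarged to compensate for the fact that $\tilde{b}_i$ no longer appears as a component of the filter state $X_i$ (in contrast with $z_i$ in the direct filter). First I would derive the error dynamics. Using $\omega_m-\hat{\eta}=\omega+\tilde{\eta}$ and the identity $S(b_i)\hat{b}_i=-S(b_i)\tilde{b}_i$ obtained from $S(b_i)b_i=0$, subtracting $\dot{\hat{b}}_i$ from $\dot{b}_i$ gives
$$
\dot{X}_i=A_{pi}X_i+B_{pi}\tilde{b}_i,\quad \dot{\tilde{b}}_i=-S(\omega)\tilde{b}_i+S(\tilde{\eta})\hat{b}_i-B_{pi}^TP_{pi}X_i,\quad \dot{\tilde{\eta}}=-\Gamma_p\sum_{i=1}^m S(b_i)\tilde{b}_i.
$$
The Lyapunov candidate I would try is
$$
V_2=\sum_{i=1}^m X_i^TP_{pi}X_i+\sum_{i=1}^m \tilde{b}_i^T\tilde{b}_i+\tilde{\eta}^T\Gamma_p^{-1}\tilde{\eta},
$$
where $P_{pi}$ solves (\ref{eq:Lyapunov equation Ppi}). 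Three cancellations should make $\dot V_2$ negative semidefinite: (i) $\tilde{b}_i^TS(\omega)\tilde{b}_i=0$ by skew-symmetry; (ii) the cross-term $2X_i^TP_{pi}B_{pi}\tilde{b}_i$ coming from differentiating $X_i^TP_{pi}X_i$ is the transpose of $2\tilde{b}_i^TB_{pi}^TP_{pi}X_i$ coming from $\|\tilde{b}_i\|^2$, so they cancel; (iii) using $S(\tilde{\eta})\hat{b}_i=-S(\hat{b}_i)\tilde{\eta}$ together with $S(\tilde{b}_i)\tilde{b}_i=0$ one finds $2\tilde{b}_i^TS(\tilde{\eta})\hat{b}_i=2\tilde{\eta}^TS(b_i)\tilde{b}_i$, which exactly matches the term generated by $\dot{\tilde{\eta}}$. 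The Lyapunov equation (\ref{eq:Lyapunov equation Ppi}) then gives $\dot{V}_2=-\sum_i X_i^TQ_{pi}X_i\le 0$, so every signal is bounded.

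Next I would apply Barbalat twice, as in the direct case. Because $\ddot V_2$ is bounded (only $\omega$ bounded is needed here, not $\dot{\omega}$), Barbalat's lemma yields $\dot V_2\to 0$, whence $X_i\to 0$. Then, using the extended form of Barbalat's lemma (a $C^{2}$ signal tending to zero with bounded second derivative has vanishing first derivative), I would check that $\ddot{X}_i=A_{pi}\dot{X}_i+B_{pi}\dot{\tilde{b}}_i$ is bounded (every term in $\dot{\tilde{b}}_i$ is) to conclude $\dot{X}_i\to 0$. Returning to $\dot{X}_i=A_{pi}X_i+B_{pi}\tilde{b}_i$ this forces $B_{pi}\tilde{b}_i\to 0$; since Hurwitz-ness of $P_{\varUpsilon_i}$ implies $\gamma_{in}\neq 0$, we obtain $\tilde{b}_i\to 0$. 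The final step is to produce $\dot{\tilde{b}}_i\to 0$: substituting into the $\tilde{b}_i$-equation, the terms $-S(\omega)\tilde{b}_i$ and $-B_{pi}^TP_{pi}X_i$ vanish, and using $\hat{b}_i-b_i=-\tilde{b}_i\to 0$ the remaining term gives $S(b_i)\tilde{\eta}\to 0$ for every $i$; Assumption~\ref{assump:Colinearity of bi} (at least two of the $b_i$ are non-collinear) then delivers $\tilde{\eta}\to 0$.

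The hard part will be closing that last step. In the direct filter the state $z_i$ couples to $\tilde{\eta}$ \emph{directly} through (\ref{Eq. err direct n order model-1-1}), so a single extra differentiation (with only $\omega$ bounded) already produces $S(b_i)\tilde{\eta}\to 0$. In the passive filter the coupling to $\tilde{\eta}$ is indirect and is hidden behind the new term $S(\omega)\tilde{b}_i$, so obtaining $\dot{\tilde{b}}_i\to 0$ by a naive second application of the extended Barbalat would require $\ddot{\tilde{b}}_i$ bounded, which via the term $S(\dot{\omega})\tilde{b}_i$ drags in $\dot{\omega}$ that Assumption~\ref{assump:boundness omega omega dot} does not explicitly supply. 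I would close this either by tacitly strengthening the assumption to bounded $\dot{\omega}$ (physically benign, and already implicit in the direct-case proof's ``all terms bounded'' step), or by invoking a Matrosov-type auxiliary function whose derivative is sign-definite on $\{X_i=0,\,\tilde{b}_i=0\}$ and, together with the non-collinearity of the $b_i$, pins down $\tilde{\eta}$.
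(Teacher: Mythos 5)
Your proof follows the paper's own argument essentially verbatim: the same error dynamics (your form $-S(\omega)\widetilde{b}_{i}+S(\widetilde{\eta})\hat{b}_{i}$ is algebraically identical to the paper's $-S(b_{i})\widetilde{\eta}+S(\widetilde{b}_{i})(\omega+\widetilde{\eta})$), the same Lyapunov function $V_{2}$, the same cancellations giving $\dot{V}_{2}=-\sum_{i}X_{i}^{T}Q_{pi}X_{i}$, and the same two applications of Barbalat's lemma to reach $X_{i}\rightarrow0$, $\dot{X}_{i}\rightarrow0$ and hence $\widetilde{b}_{i}\rightarrow0$.

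Your worry about the last step is a fair criticism of a \emph{naive} second use of the extended Barbalat lemma, but neither of your proposed fixes (assuming $\dot{\omega}$ bounded, or a Matrosov function) is needed, and you should not weaken the result by strengthening Assumption \ref{assump:boundness omega omega dot}. Write $\dot{\widetilde{b}}_{i}=h_{i}+g_{i}$ with $h_{i}:=-S(b_{i})\widetilde{\eta}$ and $g_{i}:=S(\widetilde{b}_{i})(\omega+\widetilde{\eta})-B_{pi}^{T}P_{pi}X_{i}$. You have already shown $g_{i}\rightarrow0$, and $h_{i}$ is Lipschitz because $\dot{h}_{i}=-S(S(b_{i})\omega)\widetilde{\eta}-S(b_{i})\dot{\widetilde{\eta}}$ involves only $\omega$, $\widetilde{\eta}$ and $\dot{\widetilde{\eta}}$, all bounded --- no $\dot{\omega}$ appears. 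The contradiction argument is then applied to this decomposition rather than to $\ddot{\widetilde{b}}_{i}$: if $\Vert h_{i}(t_{k})\Vert\geq\epsilon$ along some $t_{k}\rightarrow\infty$, Lipschitz continuity keeps the component of $h_{i}$ along $u_{k}=h_{i}(t_{k})/\Vert h_{i}(t_{k})\Vert$ above $\epsilon/2$ on a window $[t_{k},t_{k}+\delta]$ of fixed length, so $u_{k}^{T}\bigl(\widetilde{b}_{i}(t_{k}+\delta)-\widetilde{b}_{i}(t_{k})\bigr)\geq\delta\epsilon/4$ for $k$ large, contradicting $\widetilde{b}_{i}\rightarrow0$. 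Hence $S(b_{i})\widetilde{\eta}\rightarrow0$ and Assumption \ref{assump:Colinearity of bi} finishes the proof; this is what the paper's ``standard reasoning by contradiction'' refers to. Incidentally, your parenthetical claim that the direct-case proof already tacitly uses $\dot{\omega}$ bounded is not accurate: in (\ref{bpp}) the only derivative of $b_{i}$ that appears is $\dot{b}_{i}=S(b_{i})\omega$, so boundedness of $\omega$ suffices there as well.
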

\begin{proof}
First let us evaluate the error dynamics of (\ref{Eq. Vectoriel definition of passive n order model}).
Using (\ref{dynamic_bi}) and (\ref{eq: bi tild error definition}),(\ref{eq:eta tild error definition}),
one can get 
\begin{equation}
\begin{cases}
\begin{array}{lcl}
\dot{X}_{i}(t) & = & A_{pi}X_{i}(t)+B_{pi}\tilde{b}_{i},\\
\dot{\widetilde{b}}_{i} & = & -S(b_{i})\widetilde{\eta}+S(\tilde{b}_{i})(\omega+\widetilde{\eta})-B_{pi}^{T}P_{pi}X_{i},\\
\dot{\widetilde{\eta}} & = & -\Gamma_{p}\sum_{i=1}^{m}S(b_{i})\tilde{b}_{i},
\end{array}\end{cases}\label{Eq. err passive n order model-2-1}
\end{equation}

Consider now, the following Lyapunov function 
\begin{equation}
V_{2}=\sum_{i=1}^{m}X_{i}^{T}P_{pi}X_{i}+\sum_{i=1}^{m}\tilde{b}_{i}^{T}\tilde{b}_{i}+\widetilde{\eta}^{T}\Gamma_{p}^{-1}\widetilde{\eta},\label{Lyap1-1}
\end{equation}
the time derivative of (\ref{Lyap1-1}) in view of (\ref{Eq. err passive n order model-2-1})
is given by 
\[
\begin{array}[t]{ccl}
\dot{V}_{2} & = & \sum_{i=1}^{m}\left(X_{i}^{T}\left(A_{pi}^{T}P_{pi}+P_{pi}A_{pi}\right)X_{i}\right),\end{array}
\]
since $A_{pi},\, i=1,\ldots,m$ is Hurwitz, then the Lyapunov equation
(\ref{eq:Lyapunov equation Ppi}) holds. Therefore, one can obtain
\begin{equation}
\dot{V}_{2}=-\sum_{i=1}^{m}X_{i}^{T}Q_{pi}X_{i}\leqslant0.\label{eq:New Lyap func-1}
\end{equation}
Therefore, $X_{i}$, $\tilde{b}_{i}$ and $\widetilde{\eta}_{i}$
are bounded and consequently from (\ref{Eq. err passive n order model-2-1})
and Assumption \ref{assump:boundness omega omega dot} in subsection
\ref{sub:Attitude-kinematics,-Dynamics, assumptions}, $\dot{X}_{i}$,
$\dot{\tilde{b}}_{i}$ and $\dot{\widetilde{\eta}}_{i}$ are also
bounded. The rest of the proof is similar to the proof of Proposition
\ref{prop: High order Direct proposition}. It is easy to verify that
$\ddot{V}_{2}$ is bounded. Thus using Barbalat's lemma, ${\displaystyle \lim_{t\rightarrow\infty}\dot{V}_{2}(t)=0}$
and consequently${\displaystyle \lim_{t\rightarrow\infty}X_{i}(t)=0}$.
In addition, $\ddot{X}_{i}$ are bounded, then ${\displaystyle \lim_{t\rightarrow\infty}\dot{X}_{i}(t)=0}$
and using (\ref{Eq. err passive n order model-2-1}), ${\displaystyle \lim_{t\rightarrow\infty}\widetilde{b}_{i}(t)=0}$.
By a standard reasoning by contradiction, one gets that ${\displaystyle \lim_{t\rightarrow\infty}\dot{\widetilde{b}}_{i}(t)=0}$.
Using this fact and (\ref{Eq. err passive n order model-2-1}), therefore
${\displaystyle \lim_{t\rightarrow\infty}S(b_{i})\widetilde{\eta}=0}$.
Under Assumption \ref{assump:Colinearity of bi}, one can conclude
that ${\displaystyle \lim_{t\rightarrow\infty}\widetilde{\eta}(t)=0}$. \end{proof}
\begin{rem}
Substituting the value of $n$ by 1 in (\ref{Eq. passive n ordre case}),
and after some manipulations, one can obtain the first order passive
filter as

\begin{equation}
\begin{cases}
\begin{array}{lcl}
\dot{\hat{b}}_{i} & = & -S(\omega_{m}-\hat{\eta})\hat{b}_{i}+\gamma_{i1}(b_{i}-\hat{b}_{i}),\\
\dot{\hat{\eta}} & = & \Gamma_{2}\sum_{i=1}^{m}S(b_{i})\hat{b}_{i},
\end{array}\end{cases},\label{Eq. passive n ordre case-1}
\end{equation}

\end{rem}

\section{\label{sec:Attitude Tracking}Attitude tracking using complementary
filter principle}

We propose thereafter a new control law that use only filtered inertial
vectors and rate gyro measurements to track the desired attitude,
without using ``attitude measurements''. The filtered inertial vectors
are obtained using a new filter based on first order direct complementary
filter.

\subsection{Controller Design}

First, let us define the orientation error by $\bar{R}(t)=R(t)R_{d}^{T}(t)$
which corresponds to the quaternion error $\bar{Q}(t)=Q(t)\odot Q_{d}^{-1}(t)\equiv\left[\begin{array}{c}
\bar{q}_{0}(t)\\
\bar{q}(t)
\end{array}\right]\in\mathbb{S}^{3}$ whose dynamics is governed by 
\begin{equation}
\left[\begin{array}{c}
\dot{\bar{q}}_{0}(t)\\
\dot{\bar{q}}(t)
\end{array}\right]=\left[\begin{array}{c}
-\frac{1}{2}\bar{q}^{T}(t)R_{d}(t)\omega(t)\\
\frac{1}{2}\left(\bar{q}_{0}(t)I_{d}+S\left(\bar{q}(t)\right)\right)R_{d}(t)\omega(t)
\end{array}\right],\label{eq:erreur orientation quat}
\end{equation}
where $R_{d}(t)$ is the desired rotation matrix and it's equivalent
unit-quaternion is $Q_{d}(t)$. The angular velocity error is defined
by 
\begin{equation}
\tilde{\omega}(t)=\omega(t)-\omega_{d}(t),\label{eq:Angular velocity tracking errors}
\end{equation}
where $\omega_{d}(t)$ is the desired angular velocity. We now propose
the following new filter designed for the control problem 
\begin{equation}
\dot{\hat{b}}_{i}(t)=-S(\omega)b_{i}+\alpha_{i}(b_{i}(t)-\hat{b}_{i}(t))+S(\omega_{d})(b_{i}(t)-\hat{b}_{i}(t))+\delta_{i}S(b_{i}^{d}(t))\tilde{\omega}(t),\label{eq:new proposed observer for tracking}
\end{equation}
where $\alpha_{i}>0$, $\delta_{i}>0$ ($i=1,\ldots,m$) and the following
new control law 
\begin{equation}
\tau(t)=S(\omega(t))J\omega(t)-JS(\omega_{d}(t))\omega(t)+J\dot{\omega}_{d}(t)+J\sum_{i=1}^{m}\rho_{i}S(b_{i}^{d}(t))\hat{b}_{i}(t)-kJ\tilde{\omega}(t),\label{eq: control law for tracking}
\end{equation}
where $\rho_{i}>0,\, i=1,\ldots,m$, $k>0$ and $\hat{b}_{i}(t)$
is obtained by (\ref{eq:new proposed observer for tracking}).

Using (\ref{dynamic_bi}), (\ref{eq. dynamic de rotation}), (\ref{eq:erreur orientation quat}),
(\ref{eq:new proposed observer for tracking}) and (\ref{eq: control law for tracking})
and define the new variables $\bar{\omega}=R_{d}\tilde{\omega}$ and
$\bar{b}_{i}=R_{d}\tilde{b}_{i}$ one can get the following closed
loop dynamics

\begin{equation}
\begin{cases}
\begin{array}{lcl}
\dot{\bar{b}}_{i}(t) & = & -\alpha_{i}\bar{b}_{i}(t)-\delta_{i}S(r_{i}(t))\bar{\omega}(t),\\
\dot{\bar{q}}_{0}(t) & = & -\frac{1}{2}\bar{q}^{T}(t)\bar{\omega}(t),\\
\dot{\bar{q}}(t) & = & \frac{1}{2}\left(\bar{q}_{0}(t)I_{d}+S\left(\bar{q}(t)\right)\right)\bar{\omega}(t),\\
\dot{\bar{\omega}}(t) & = & -2(\bar{q}_{0}I_{d}-S(\bar{q}))W\bar{q}-\sum_{i=1}^{m}\rho_{i}S(r_{i})\bar{b}_{i}(t)-k\bar{\omega}(t),
\end{array}\end{cases}\label{Eq. closed loop dynamics}
\end{equation}
where $W=-\sum_{i=1}^{m}\rho_{i}S(r_{i})^{2}$, $W$ is a positive
define matrix (see Lemma 1 and Lemma 2 \cite{Tayebi2013}).

Let us define the state $\Theta:=(\bar{b}_{1},...,\,\bar{b}_{m},\,\bar{Q},\,\bar{\omega})$.
The closed loop dynamics (\ref{Eq. closed loop dynamics}) can be
rewritten as $\dot{\Theta}=G(\Theta)$ such that $\Theta\in\Delta$
and $\Delta:=\mathbb{R}^{3m}\times\mathbb{S}^{3}\times\mathbb{R}^{3}$,
and define the following positive radially unbounded function : $V_{3}:\,\varDelta\rightarrow\mathbb{R}$
\begin{equation}
V_{3}(\Theta)=\sum_{i=1}^{m}\frac{\rho_{i}}{\delta_{i}}\bar{b}_{i}^{T}(t)\bar{b}_{i}(t)+2\bar{q}(t)^{T}W\bar{q}(t)+\bar{\omega}(t)^{T}\bar{\omega}(t).\label{eq:lyap func}
\end{equation}

\begin{thm}
\label{thm: attitude tracking}Consider System (\ref{eq:real kinematics})-(\ref{eq. dynamic de rotation})
and the control law (\ref{eq: control law for tracking}) with the
observer given by (\ref{eq:new proposed observer for tracking}).
Under Assumption \ref{assump:Colinearity of bi} in Subsection \ref{sub:Attitude-kinematics,-Dynamics, assumptions}
and if Hypothesis of Lemma 1 in \cite{Benziane2015} holds, then

$(1)$ The equilibria of the closed-loop system (\ref{Eq. closed loop dynamics})
are defined by 
\[
\Theta_{1}^{\pm}=(\underset{m}{\underbrace{\mathbf{0}_{3},...,\mathbf{0}_{3}}},\,\left[\begin{array}{c}
\pm1\\
\boldsymbol{0}
\end{array}\right],\,\boldsymbol{0}),\ \Theta_{2,3,4}^{\pm}=(\underset{m}{\underbrace{\mathbf{0}_{3},...,\mathbf{0}_{3}}},\,\left[\begin{array}{c}
0\\
\pm v_{j}
\end{array}\right],\,\boldsymbol{0}),
\]
where $v_{j}\ ,j=1,2,3$ are the eigenvectors of $W$.

$(2)$ The equilibria $\Theta_{1}^{\pm}$ are asymptotically stable
with a domain of attraction containing the set 
\[
C_{a}^{+}:=\left\{ \Theta\in\triangle\mid V_{3}(\Theta)<4\lambda_{min}(W)\;\hbox{and}\;\bar{q}_{0}>0\right\} ,
\]
for $\Theta_{1}^{+}$ and 
\[
C_{a}^{-}:=\left\{ \Theta\in\triangle\mid V_{3}(\Theta)<4\lambda_{min}(W)\;\hbox{and}\;\bar{q}_{0}<0\right\} ,
\]
for $\Theta_{1}^{-}$, where $\lambda_{min}(W)$ is the smallest eigenvalue
of $W$.

$(3)$ The equilibria $\Theta_{2,3,4}^{\pm}$ are locally unstable
and \textup{$\Theta_{1}^{\pm}$} are almost globally asymptotically
stable. \end{thm}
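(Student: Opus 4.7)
The plan is to use $V_3$ as a Lyapunov function, apply LaSalle's invariance principle to characterise convergence to the desired equilibria $\Theta_1^{\pm}$, and separately analyse instability at the undesired equilibria to conclude almost-global asymptotic stability. The first step is the computation of $\dot V_3$ along (\ref{Eq. closed loop dynamics}):
\[
\dot V_3 = \sum_{i=1}^{m}\tfrac{2\rho_i}{\delta_i}\bar b_i^T\dot{\bar b}_i + 4\bar q^T W\dot{\bar q} + 2\bar\omega^T\dot{\bar\omega}.
\]
Substituting the closed-loop right-hand sides produces three blocks: a dissipative block $-\sum_i(2\rho_i\alpha_i/\delta_i)|\bar b_i|^2 - 2k|\bar\omega|^2$; $\bar b_i$--$\bar\omega$ cross terms that cancel thanks to $S(r_i)^T=-S(r_i)$ and the deliberate matching of the coupling coefficients $\rho_i,\delta_i$ between the observer~(\ref{eq:new proposed observer for tracking}) and the control law~(\ref{eq: control law for tracking}); and $\bar q$--$\bar\omega$ cross terms coming from $4\bar q^T W\dot{\bar q}$ and from the $-2(\bar q_0 I_d - S(\bar q))W\bar q$ piece of $\dot{\bar\omega}$, which should combine to zero using $W^T=W$, $\bar q^T S(\bar q)=0$ and $S(\bar q)^T=-S(\bar q)$. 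The outcome is $\dot V_3\le -\sum_i(2\rho_i\alpha_i/\delta_i)|\bar b_i|^2 - 2k|\bar\omega|^2\le 0$, with equality forcing $\bar b_i=0$ for all $i$ and $\bar\omega=0$. Verifying the clean cancellation in the $\bar q$--$\bar\omega$ block is the main technical step.

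For Part (1), setting all derivatives in (\ref{Eq. closed loop dynamics}) to zero gives $\bar\omega=0$, then $\bar b_i=0$ (from the filter equation, since $\alpha_i>0$), and the reduced condition $(\bar q_0 I_d - S(\bar q))W\bar q = 0$. Left-multiplying by $\bar q^T$ and using $\bar q^T S(\bar q)=0$ yields $\bar q_0\,\bar q^T W\bar q = 0$; positive definiteness of $W$ then forces either $\bar q=0$, whence $\bar q_0=\pm1$ and the equilibria $\Theta_1^{\pm}$, or $\bar q_0=0$ and $S(\bar q)W\bar q=0$, in which case $\bar q$ is a unit eigenvector of $W$, giving the $\Theta_{2,3,4}^{\pm}$ family.

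For Part (2), LaSalle is applied on $C_a^+$ (and symmetrically on $C_a^-$). Monotonicity of $V_3$ keeps $\{V_3<4\lambda_{\min}(W)\}$ forward invariant; combined with $V_3\ge 2\bar q^T W\bar q\ge 2\lambda_{\min}(W)|\bar q|^2$ and the unit-norm constraint $\bar q_0^2+|\bar q|^2=1$, this bound is used to prevent $\bar q_0$ from reaching zero, so the hemisphere $\{\bar q_0>0\}$ is also forward invariant inside $C_a^+$. LaSalle's principle then yields convergence to the largest invariant subset of $\{\bar b_i=0,\ \bar\omega=0\}$; on this set the residual dynamics reduce to $\dot{\bar\omega}=-2(\bar q_0 I_d - S(\bar q))W\bar q=0$, returning us to the equilibrium condition of Part (1), which together with $\bar q_0>0$ collapses to $\Theta_1^+$. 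The argument for $\Theta_1^-$ is identical.

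For Part (3), the plan is first to prove local instability of each $\Theta_{2,3,4}^{\pm}$, either by linearising the closed-loop vector field there and exhibiting an eigenvalue with positive real part, or by constructing a Chetaev function from a shift of $V_3$: at $\Theta_{2,3,4}^{\pm}$ the quadratic $2\bar q^T W\bar q$ has a saddle in the $\bar q$-direction whenever the associated eigenvalue exceeds $\lambda_{\min}(W)$, and one exploits this non-minimising direction. Hyperbolicity then makes each local stable manifold of $\Theta_{2,3,4}^{\pm}$ a submanifold of positive codimension, hence of Lebesgue measure zero in $\Delta=\mathbb{R}^{3m}\times\mathbb{S}^3\times\mathbb{R}^3$. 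Since every trajectory is bounded (by properness of $V_3$) and converges to the equilibrium set by the global version of LaSalle, almost every initial condition is attracted to $\Theta_1^+$ or $\Theta_1^-$, yielding the announced almost-global asymptotic stability. The principal obstacles I anticipate are the full cancellation in the $\bar q$--$\bar\omega$ block of the Lyapunov derivative and the identification of the unstable direction at $\Theta_{2,3,4}^{\pm}$ in the presence of the filter coordinate $\bar b_i$, which couples non-trivially into the linearisation.
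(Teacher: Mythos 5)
Your overall architecture --- the function $V_3$, LaSalle for items (1)--(2), instability of $\Theta_{2,3,4}^{\pm}$ plus a measure-zero argument for item (3) --- is the same as the paper's, which in fact delegates items (1) and (2) to \cite{Benziane2015} and only writes out item (3). However, the two steps you yourself single out as delicate do not close with the constants as printed, and you assert rather than verify them. First, the $\bar q$--$\bar\omega$ cross terms in $\dot V_3$ do \emph{not} cancel for (\ref{Eq. closed loop dynamics}) with $V_3$ as in (\ref{eq:lyap func}): if $a$ denotes the coefficient of $\bar q^{T}W\bar q$ in $V_3$ and $c$ the coefficient of $(\bar q_0 I_d-S(\bar q))W\bar q$ in $\dot{\bar\omega}$, a direct computation using $W=W^{T}$ and $S(\bar q)^{T}=-S(\bar q)$ leaves the residual $(a-2c)\,\bar\omega^{T}(\bar q_0 I_d-S(\bar q))W\bar q$, which vanishes only if $a=2c$, whereas here $a=2$ and $c=2$. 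Second, your forward-invariance argument for the hemisphere uses $V_3\ge 2\bar q^{T}W\bar q\ge 2\lambda_{min}(W)(1-\bar q_0^{2})$, and with the threshold $4\lambda_{min}(W)$ this only gives $1-\bar q_0^{2}<2$, which excludes nothing. Both defects disappear simultaneously if the quaternion term of $V_3$ is $4\bar q^{T}W\bar q$ (then $a=2c$, and $V_3<4\lambda_{min}(W)$ forces $|\bar q|^{2}<1$, hence $\bar q_0\neq 0$); you need to carry this normalization through explicitly rather than take the cancellation on faith.

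For item (3) your primary route genuinely differs from the paper's. The paper does not linearize: it exhibits, via the left translation $\bar Q^{*}=(0,v_j^{T})^{T}\odot(x_0,x^{T})^{T}$, points arbitrarily close to $\Theta_{2,3,4}^{\pm}$ where $V_3$ is strictly below its equilibrium value $2\lambda_j$ (essentially your ``Chetaev from a shifted $V_3$'' alternative) and invokes monotonicity of $V_3$. Note that the robust descent direction is not the eigenvalue saddle you describe --- that criterion would leave the equilibrium attached to $\lambda_{min}(W)$ uncovered --- but the $\bar q_0$ direction: since $|\bar q|^{2}=1-\bar q_0^{2}$ on $\mathbb{S}^{3}$, tilting $\bar q_0$ away from $0$ lowers $\bar q^{T}W\bar q$ at \emph{every} $\Theta_{2,3,4}^{\pm}$. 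Your linearization route would actually be stronger if completed, because the paper's closing step (``the set of unstable equilibria has Lebesgue measure zero, therefore almost all trajectories converge to $\Theta_1^{\pm}$'') tacitly identifies the measure of the equilibrium set with the measure of its basin, and the correct bridge is exactly the (center-)stable manifold argument you invoke. But that bridge rests on hyperbolicity of $\Theta_{2,3,4}^{\pm}$, which you assert without computing the Jacobian; nothing guarantees a priori that the spectrum avoids the imaginary axis (a repeated eigenvalue of $W$, for instance, yields a continuum of equilibria and hence a zero eigenvalue), so you must either compute the spectrum under the hypothesis of Lemma 1 of \cite{Benziane2015} or replace the stable manifold theorem by its center-stable version. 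As it stands, this unproven hyperbolicity is the genuine gap in your part (3).
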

\begin{proof}
The proof of the first item is similar to the proof of Theorem \ref{thm: attitude tracking}
presented in \cite{Benziane2015}. Recall that the closed loop dynamics
(\ref{Eq. closed loop dynamics}) is autonomous, therefore it is possible
to use LaSalle's invariance theorem to proof the second item. Note
that the time derivative of (\ref{eq:lyap func}) using (\ref{Eq. closed loop dynamics})
is given by $\dot{V}_{3}(\Theta)=-k\bar{\omega}(t)^{T}\bar{\omega}(t)-\sum_{i=1}^{m}\alpha_{i}\frac{\rho_{i}}{\delta_{i}}\bar{b}_{i}(t)^{T}\bar{b}_{i}(t)\leq0$
and the proof of item (2) will be similar to the proof of Theorem
1 presented in \cite{Benziane2015}.

(3) Let us proof that the equilibria $\Theta_{2,3,4}^{\pm}$ are unstable.
Since the only difference between these equilibria is the value of
the eigenvector, the proof is given only for $\Theta_{2}^{+}\in\triangle$
. The other cases will be similar. To do this, we consider $\Theta_{2}^{*}:=(\bar{b}_{1}^{*},...,\,\bar{b}_{m}^{*},\,\bar{Q}^{*},\,\bar{\omega}^{*})$
a neighborhood of $\Theta_{2}^{+}$ (arbitrary close) and since the
function $V_{3}$ is non-increasing, it suffices to prove that $V_{3}(\Theta_{2}^{*})-V_{3}(\Theta_{2}^{+})<0$.
Let us use the following change of variable

\begin{equation}
\bar{Q}^{*}=\left[\begin{array}{c}
\bar{q}_{0}^{*}\\
\bar{q}^{*}
\end{array}\right]=\left[\begin{array}{c}
0\\
v_{1}
\end{array}\right]\odot\left[\begin{array}{c}
x_{0}\\
x
\end{array}\right]=\left[\begin{array}{c}
-v_{1}^{T}x\\
x_{0}v_{1}+S(v_{1})x
\end{array}\right]\label{eq:change_var_inv}
\end{equation}

Using (\ref{eq:change_var_inv}) and the fact that $Wv_{1}=\lambda_{1}v_{1}$
(where $\lambda_{1}$ is the eigenvalue associated to the unit eigenvector
$v_{1}$ of $W$), one can evaluate $D=V_{3}(\Theta_{2}^{*})-V_{3}(\Theta_{2}^{+})$
as follow

\begin{equation}
D=\sum_{i=1}^{m}\frac{\rho_{i}}{\delta_{i}}\bar{b}_{i}^{*T}\bar{b}_{i}^{*}+\bar{\omega}^{*}{}^{T}\bar{\omega}^{*}+4\lambda(x_{0}^{2}-1)-4x^{T}S(v_{1})WS(v_{1})x,\label{eq:difference1_unstable}
\end{equation}

If we take $x$ close to $v_{2}$ such that $x=\varepsilon v_{2}$,
where $\varepsilon>0$ sufficiently small, the unit quaternion constraint
gives $x_{0}^{2}=1-\varepsilon^{2}$. In this case, one can gets $D=\sum_{i=1}^{m}\frac{\rho_{i}}{\delta_{i}}\bar{b}_{i}^{*T}\bar{b}_{i}^{*}+\bar{\omega}^{*}{}^{T}\bar{\omega}^{*}-4\lambda_{1}\varepsilon^{2}$
which means that if $\varepsilon^{2}>\frac{1}{4\lambda_{1}}\left(\sum_{i=1}^{m}\frac{\rho_{i}}{\delta_{i}}\bar{b}_{i}^{*T}\bar{b}_{i}^{*}+\bar{\omega}^{*}{}^{T}\bar{\omega}^{*}\right)$
then $D<0$. As a result, there exist $\Theta_{2}^{*}$ arbitrary
close to $\Theta_{2}^{+}$ such that $V_{3}(\Theta_{2}^{*})<V_{3}(\Theta_{2}^{+})$
and since the function $V_{3}$ is non increasing, it is clear that
$\Theta_{2}^{+}$ is unstable. Similarly, all equilibria $\Theta_{2,3,4}^{\pm}$
are unstable. Finally, in the state space $\triangle$ the set of
unstable equilibria is Lebesgue measure zero. Therefore, almost all
trajectories converge asymptotically to $\Theta_{1}^{\pm}$.\end{proof}
\begin{rem}
In the case of stabilization ($\omega_{d}=0$), the control law (\ref{eq: control law for tracking})
with the filter (\ref{eq:new proposed observer for tracking}) can
be modified to get 
\begin{eqnarray}
\dot{\hat{b}}_{si}(t) & = & \alpha_{i}(b_{i}(t)-\hat{b}_{si}(t))-S(\omega(t))b_{i}(t)+\delta_{i}S(b_{i}^{d})\omega(t),\label{eq:filter in the case of stabilization}\\
\tau_{s}(t) & = & \sum_{i=1}^{m}\rho_{i}S(b_{i}^{d}(t))\hat{b}_{si}(t)-k\omega(t).\label{eq:control in the case of stabilization}
\end{eqnarray}

\end{rem}

\section{\label{sec:Experimentals}Experimental results}

In this section, we present some experimental results showing the
effectiveness and the performances of the proposed solutions. Experiments
were done based on DIY drone project \cite{3DRa}. We have used the
platform shown in Figure \ref{fig:Testbench-DIY-Quad}. It is a test-bench
with DIY Quad equipped with the APM2.6 \cite{3DR} autopilot used
for indoor tests. The autopilot APM2.6 is based on Atmel ATMEGA2560-16AU
using an external clock of 16MHz. The embedded system is equipped
with Invensense's 6 DoF Accelerometer/Gyro MPU-6000 and a 3-axis external
compass HMC5883L-TR. The main loop operating frequency of the firmware
is 100Hz. The acquisition of accelerometer and gyros measurements
is similar to the main loop while the frequency acquisition of magnetometer
measurements is 10 Hz (after an internal filtering).

For experiments, $r_{1}=[0,0,1]^{T}$ and $r_{2}=[0.434,-0.04,0.899]^{T}$
are the gravitational earth vector and magnetic earth filed vector,
respectively, expressed in North East Down ``NED'' reference frame
and both normalized. To validate our results, two main experiments
were done. The first one was made to evaluate the performance of our
attitude observer using the well known Xsens MTi AHRS, as illustrated
in Figure \ref{fig:The-Inertial-Measurements}. In this experiment,
the attitude measurements provided by the MTi is considered as a reference
signal. The second experiment consists of the implementation of our
attitude controller directly on the autopilot APM2.6.

\begin{center}
\begin{figure}
\begin{centering}
\includegraphics[width=9cm]{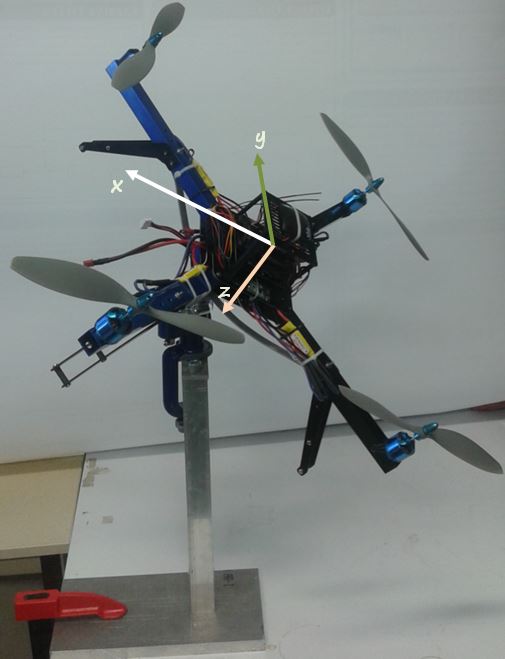}
\par\end{centering}

\protect\caption{\label{fig:Testbench-DIY-Quad}Test-bench DIY Quad}
\end{figure}

\par\end{center}

\subsection{Attitude estimation}

As described above, the attitude measurements delivered by the Xsens
MTi will be considered as a reference signal for the comparison of
results. This reference is obtained with an internal Kalman filter
implemented inside MTi. The explicit observer presented in \cite{Mahony2008}
with quaternion formulation was implemented and will be termed as
\textit{``MHP''} observer. 
\begin{rem}
For simplicity and implementation consideration, only the first order
\textit{``Direct''} and \textit{``Passive''} filters given by
(\ref{Eq. direct n ordre case-1}) and (\ref{Eq. passive n ordre case-1})
were implemented using first order Euler integration, where we take
$i=1,2$, $b_{1}=a=[\begin{array}{ccc}
a_{x} & a_{y} & a_{z}\end{array}]^{T}\,(m/s^{2})$ for accelerometer measurements and $b_{2}=m=[\begin{array}{ccc}
m_{x} & m_{y} & m_{z}\end{array}]^{T}\,(normalized)$ for magnetometer measurements. 
\end{rem}
For implementation, the following gains were chosen: $\gamma_{11}=\gamma_{21}=1$
and $\Gamma_{1}=\Gamma_{2}=0.003I_{d}$ for both two filters while
for \textit{``MHP\textquotedblright{}} observer, the gains presented
in \cite{Mahony2008} were used : $k_{P}=1$ and $k_{I}=0.3$. The
measured initial attitude condition given by MTi was $Q(0)=[0.998,\:-0.031,\:-0.029,\:-0.046]^{T}$,
which was used as initial condition for \textit{``MHP\textquotedblright{}}
observer and the equivalent initial conditions for \textit{``Direct''}
and \textit{``Passive''} proposed filters were $a(0)=[0.771,\:-0.796,\:9.652]^{T}$
and $m(0)=[0.049,\:0.016,\:-0.263]^{T}$. For reporting results, we
first consider the performance of the data fusion obtained by implemented
complementary filters. Then, figures \ref{fig:Complementary-filtering-experime acc}
and \ref{fig:Complementary-filtering-experime-mag} show experimental
results for the direct and passive filters. One can observe that the
two complementary filters have similar performance which corroborates
the fact that asymptotic stability were demonstrated for both filters.
As explained before, the passive filter is less sensitive to noise.
This can be illustrated in Figure \ref{fig:Complementary-filtering-experime acc}-(c).
Note that the raw magnetometer measurements are not very corrupted
by noise as illustrated in Figure \ref{fig:Complementary-filtering-experime-mag}
and this due to the fact that they were already filtered inside the
MTi. Thereafter, the outputs of theses filters are used to estimate
attitude using TRIAD algorithm as illustrated in Figure \ref{fig:Attitude-estimation-experimental}.
In this figure, the estimated attitude is compared to that obtained
with the raw measurements. The comparison presented in Figure \ref{fig:Attitude-estimation-experimental-comp}
illustrate the effectiveness of the proposed observer compared to
Kalman filter (implemented inside MTi) or \textit{``MHP\textquotedblright{}}
observer. In Figure \ref{fig:Rate-gyro-bias}, the gyros bias estimation
from both observers is shown and both two observers give roughly similar
results.

\begin{center}
\begin{figure}
\begin{centering}
\includegraphics[width=9cm]{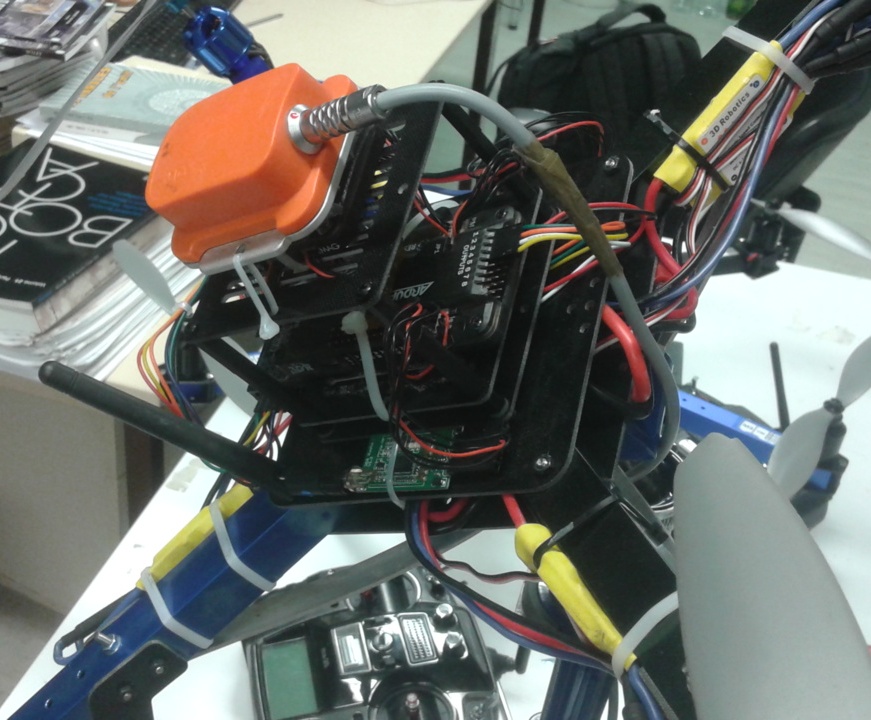}
\par\end{centering}

\protect\caption{\label{fig:The-Inertial-Measurements}The Inertial Measurements Unit
Xsens mounted on the test-bench}
\end{figure}

\par\end{center}

\begin{center}
\begin{figure}
\begin{centering}
\includegraphics[width=9cm]{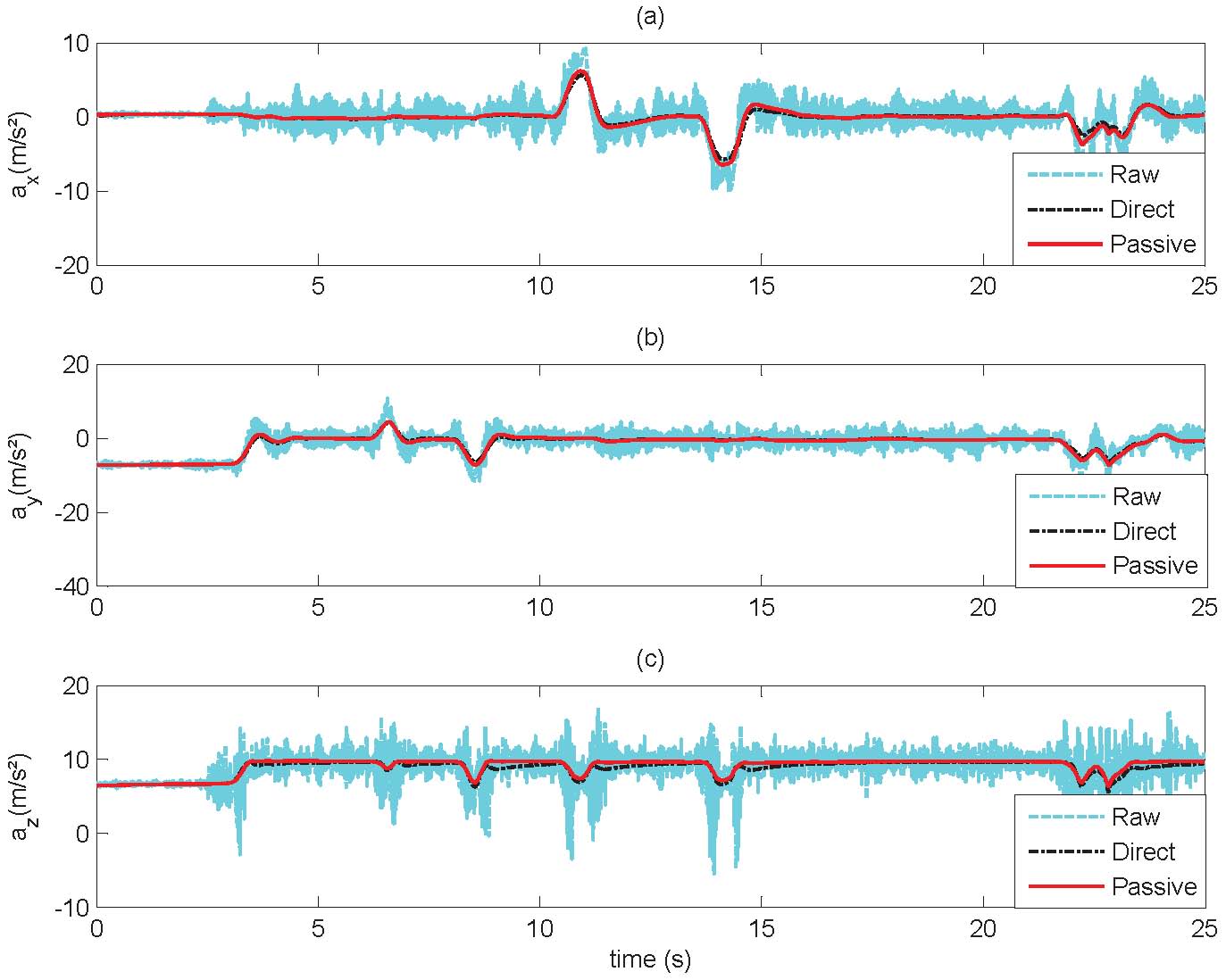} 
\par\end{centering}

\protect\caption{\label{fig:Complementary-filtering-experime acc}Complementary Accelerometer
filters experimental results}
\end{figure}

\par\end{center}

\begin{center}
\begin{figure}
\begin{centering}
\includegraphics[width=9cm]{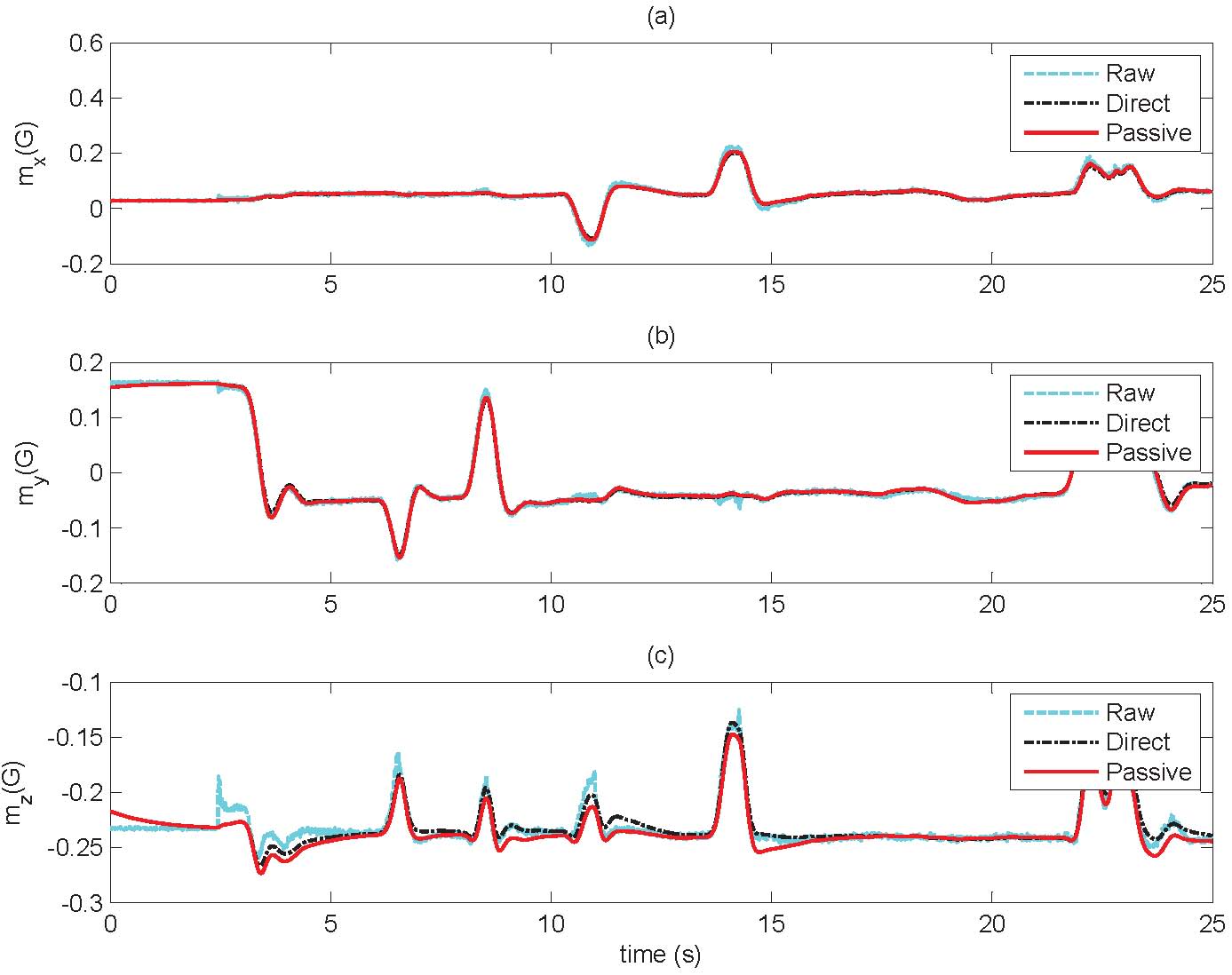} 
\par\end{centering}

\protect\caption{\label{fig:Complementary-filtering-experime-mag}Complementary Magnetometer
filters experimental results}
\end{figure}

\par\end{center}

\begin{center}
\begin{figure}
\begin{centering}
\includegraphics[width=9cm]{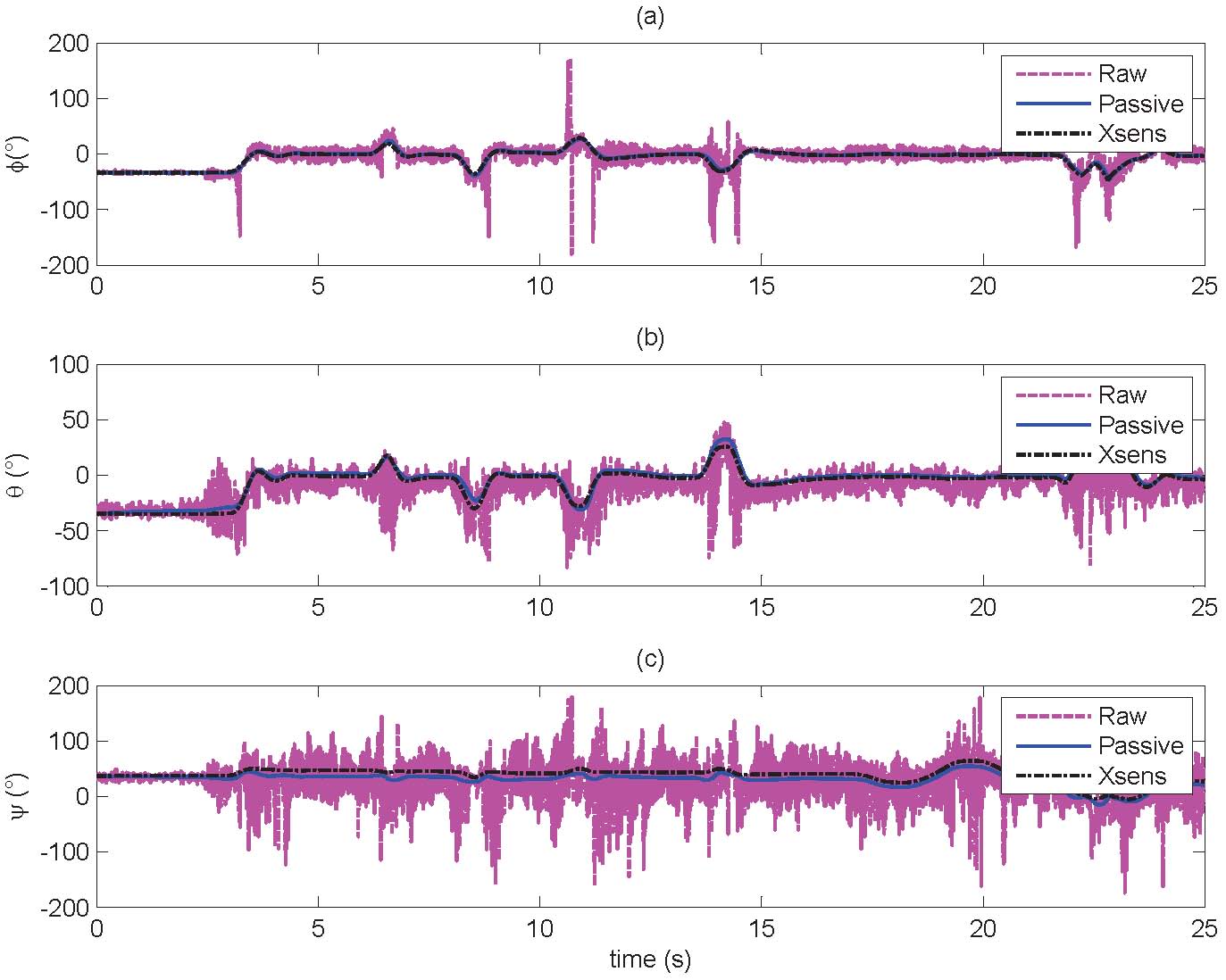} 
\par\end{centering}

\protect\caption{\label{fig:Attitude-estimation-experimental}Attitude estimation experimental
results for the proposed observers}
\end{figure}

\par\end{center}

\begin{center}
\begin{figure}
\begin{centering}
\includegraphics[width=9cm]{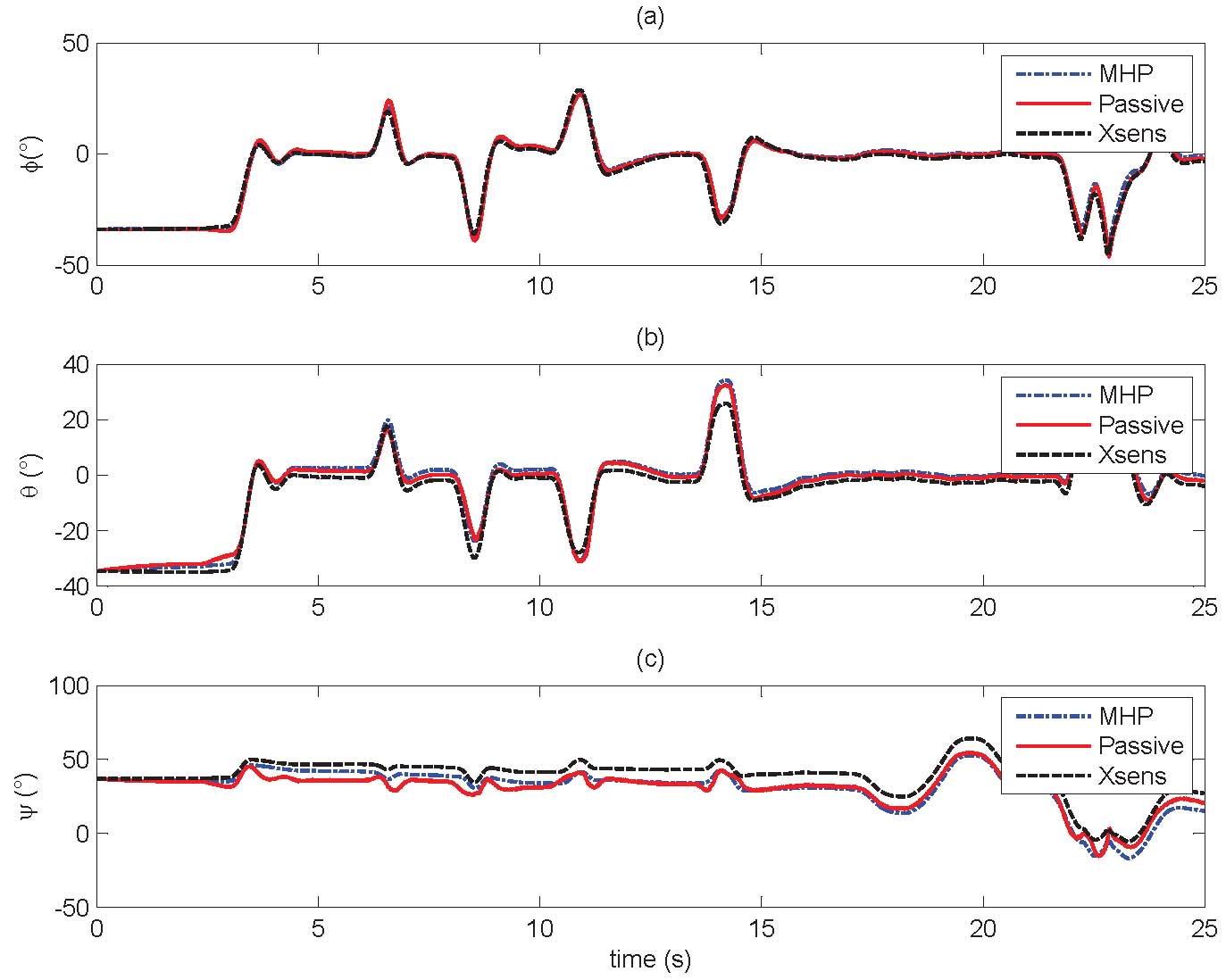} 
\par\end{centering}

\protect\caption{\label{fig:Attitude-estimation-experimental-comp}Attitude estimation
experimental results comparison}
\end{figure}

\par\end{center}

\begin{center}
\begin{figure}
\begin{centering}
\includegraphics[width=9cm]{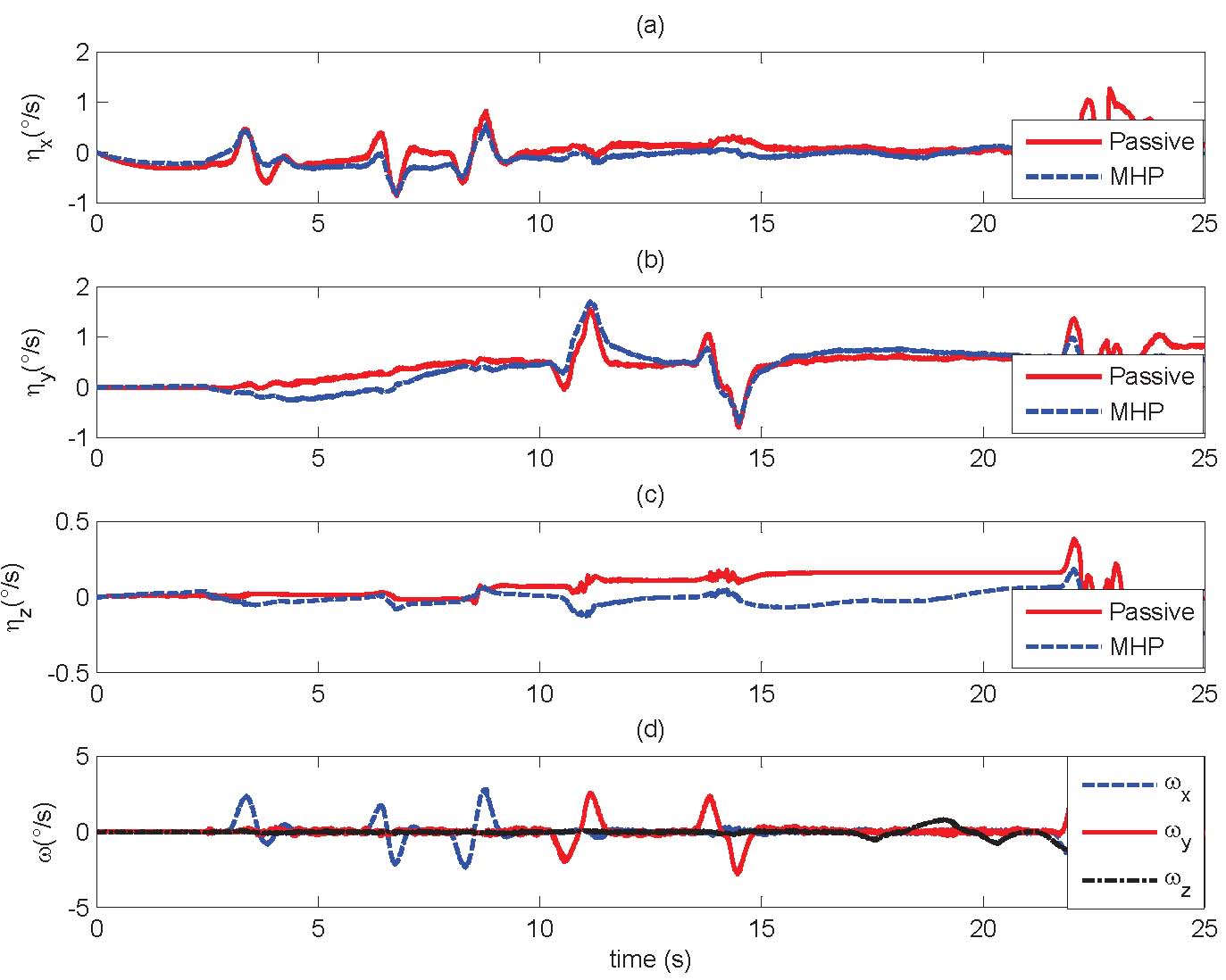} 
\par\end{centering}

\protect\caption{\label{fig:Rate-gyro-bias}Rate gyro bias estimation experimental
results}
\end{figure}

\par\end{center}

\subsection{Attitude stabilization}

For this test, we considered for simplicity and without loss of generality
the special case of stabilization of attitude. The experiment was
done using the test-bench shown in Figure \ref{fig:Testbench-DIY-Quad}.
The controller (\ref{eq:control in the case of stabilization}) was
implemented using the following notations and parameters : $R_{d}(t)=I$,
which means $b_{1}^{d}=r_{1}$ and $b_{2}^{d}=r_{2}$; $\hat{b}_{1}=\hat{a}\:(normalized)$,
$\hat{b}_{2}=\hat{m}\:(normalized)$ are the estimates of the inertial
vector measurements given by the accelerometer and magnetometer, respectively;
$\omega(t)\:(rad/s)$ is the rate gyro measurements; $\rho_{1}=1.66$
and $\rho_{2}=0.1161$ (for the axis $x$ and $y$), and $\rho_{1z}=0.05$
and $\rho_{2z}=0.03$ (for the $z$ axis); The damping $k=0.2621$
and the filter gains $\alpha_{1}=6$ and $\alpha_{2}=10$.

The main loop for attitude stabilization is running at 100Hz. At each
loop the measurements of accelerometer and magnetometer are normalized
after the execution of the observer (\ref{eq:new proposed observer for tracking}).
Due to the poor quality of magnetometer measurements the gains corresponding
to $z$ axis are chosen small. Therefore, the stabilization is done
around $x$ and $y$ axis only. Then, starting from an arbitrary measured
initial condition in Euler angles $(\phi,\theta,\psi)=(-18.478,41.192,2.847)\text{\textdegree}$,
the evolution of normalized inertial measurements vectors, torque
and Euler angles are shown in Figure \ref{fig:Experimental-results}.
We can see that after transient time, the normalized measurements
vectors $a$ and $m$ converge to the desired values $b_{1}^{d}=[0,0,1]^{T}$
and $b_{2}^{d}=[0.434,-0.04,0.899]^{T}$. Consequently according with
the attitude estimate, this corresponds to the roll and pitch angles
close to zero which confirms the stabilization of the platform. We
can also observe that control torque is smooth without noise through
the use of the complementary filter.

\begin{center}
\begin{figure}
\begin{centering}
\includegraphics[width=9cm]{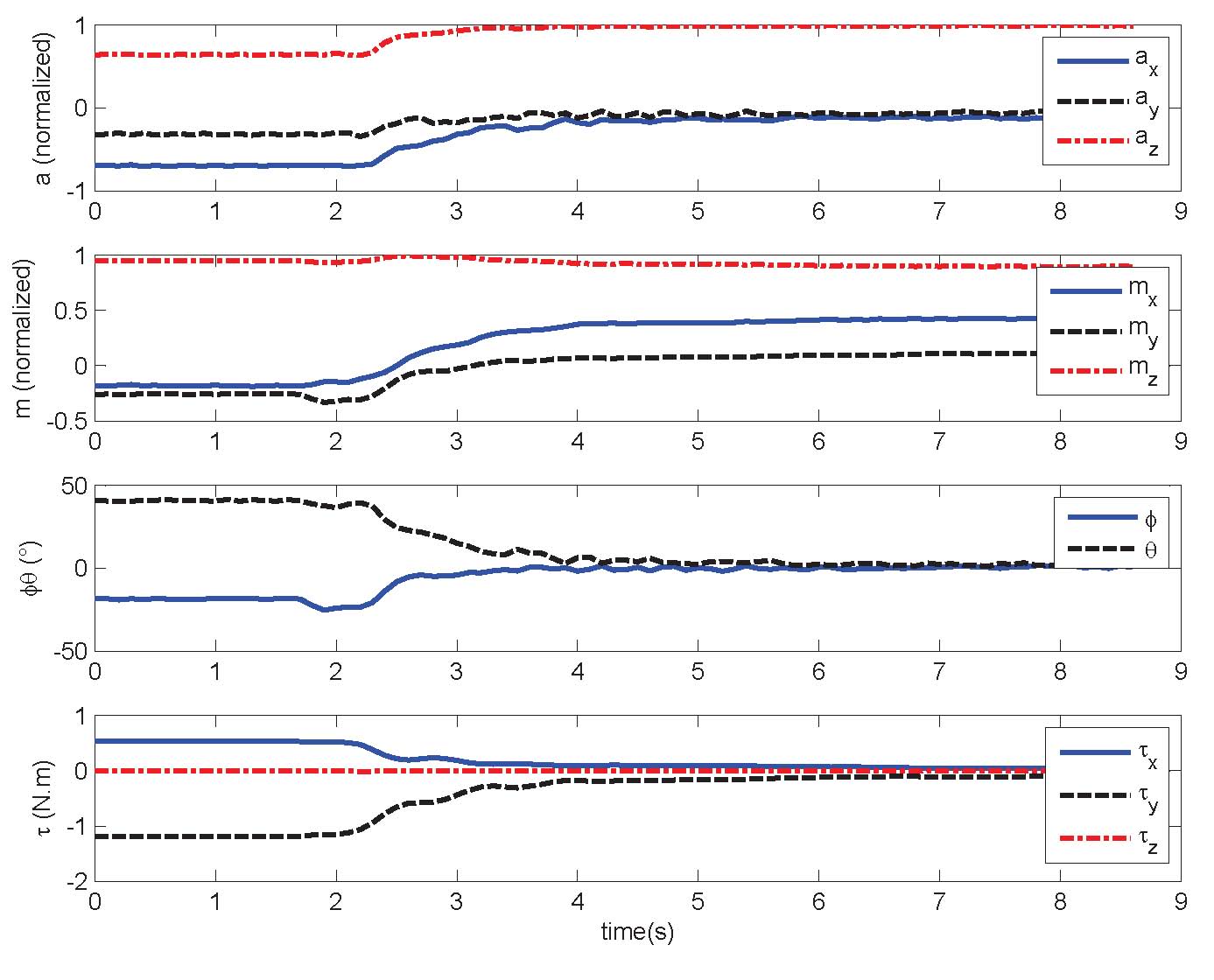} 
\par\end{centering}

\protect\caption{\label{fig:Experimental-results}Attitude stabilization experimental
results}
\end{figure}

\par\end{center}

\section{\label{sec:conclusion}Conclusions}

Due to its importance and despite the considerable number of solutions,
the problem of attitude estimation and control is still relevant.
This paper presents High order \textit{``Direct''} and \textit{``Passive''}
linear-like complementary filters for attitude and gyro-rate bias
estimation. Using Lyapunov analysis, the proposed solutions ensure
global convergence. Another novelty of this work lies in the proposition
of new control law for attitude tracking problem, in which the principle
of data fusion is used. Only filtered inertial vectors and rate gyro
measurements were used in the control law, without using ``attitude
measurements'' and ensuring an almost global stability. To show the
efficiency and performance of the proposed solutions, a set of experimental
tests were performed based on DIY drone Quadcopter, equipped with
APM2.6 autopilot. The passive second order filter can be of great
help. Indeed, in future work, this filter will be used to enhance
the low sampling frequency of magnetometer measurements compared to
that of accelerometer.

\appendices{}

\appendix{Proof of Lemma 1}

Showing the thesis amounts to exhibit an example. For that purpose,
consider $\gamma=(C_{n}^{l}\alpha^{l})_{1\leq l\leq n}\in\mathbb{R}^{n}$,
where $n$ is a positive integer, $\alpha$ a positive real number
and the $C_{n}^{l}$ are the binomial coefficients. Then $P_{\gamma}(s)=(s+\alpha)^{n}$
implying that $\gamma\in\mathcal{H}_{n}$. It remains to show that
$\gamma\in\mathcal{\overline{H}}_{n}$. One clearly has that $P_{\pi(\gamma)}=(P_{\gamma}(s)-P_{\gamma}(0))/s$
and thus the roots of $P_{\pi(\gamma)}$ are the non zero roots of
$(s+\alpha)^{n}-\alpha^{n}$. Every root $z$ of the previous polynomial
verifies that $(\frac{z}{\alpha}+1)^{n}=1$ and then $\frac{z}{\alpha}+1=e^{j\frac{2k\pi}{n}}$,
where $j^{2}=-1$ and $k=0,\ldots,n-1$. It yields that $Re(z)=\alpha(\cos(\frac{2k\pi}{n})-1)$,
which is negative only if $k\neq0$ and in the latter case $z=0$.
One deduces that all the roots of $P_{\pi(\gamma)}$ have negative
real part, i.e., $P_{\pi(\gamma)}$ is Hurwitz and thus $\gamma\in\mathcal{\overline{H}}_{n}$.

\bibliographystyle{IEEEtran}
\bibliography{Biblio_IJC}

\begin{thebibliography}{10}
\providecommand{\url}[1]{#1}
\csname url@samestyle\endcsname
\providecommand{\newblock}{\relax}
\providecommand{\bibinfo}[2]{#2}
\providecommand{\BIBentrySTDinterwordspacing}{\spaceskip=0pt\relax}
\providecommand{\BIBentryALTinterwordstretchfactor}{4}
\providecommand{\BIBentryALTinterwordspacing}{\spaceskip=\fontdimen2\font plus
\BIBentryALTinterwordstretchfactor\fontdimen3\font minus
  \fontdimen4\font\relax}
\providecommand{\BIBforeignlanguage}[2]{{%
\expandafter\ifx\csname l@#1\endcsname\relax
\typeout{** WARNING: IEEEtran.bst: No hyphenation pattern has been}%
\typeout{** loaded for the language `#1'. Using the pattern for}%
\typeout{** the default language instead.}%
\else
\language=\csname l@#1\endcsname
\fi
#2}}
\providecommand{\BIBdecl}{\relax}
\BIBdecl

\bibitem{Joshi1995}
S.~Joshi, A.~Kelkar, and J.-Y. Wen, ``Robust attitude stabilization of
  spacecraft using nonlinear quaternion feedback,'' \emph{IEEE Transactions on
  Automatic Control}, vol.~40, no.~10, pp. 1800--1803, 1995.

\bibitem{Thienel2003}
J.~Thienel and R.~Sanner, ``A coupled nonlinear spacecraft attitude controller
  and observer with an unknown constant gyro bias and gyro noise,'' \emph{IEEE
  Transactions on Automatic Control}, vol.~48, no.~11, pp. 2011--2015, Nov
  2003.

\bibitem{Benallegue2008}
A.~Benallegue, A.~Mokhtari, and L.~Fridman, ``High-order sliding-mode observer
  for a quadrotor uav,'' \emph{Int. J. Robust and Nonlinear Control}, vol.~18,
  pp. 427--440, 2008.

\bibitem{Tayebi2008}
A.~Tayebi, ``Unit quaternion-based output feedback for the attitude tracking
  problem,'' \emph{IEEE Transactions on Automatic Control}, vol.~53, no.~6, pp.
  1516--1520, July 2008.

\bibitem{Lee2013}
T.~Lee, ``Robust adaptive attitude tracking on so(3) with an application to a
  quadrotor uav,'' \emph{IEEE Transactions on Control Systems Technology},
  vol.~21, no.~5, pp. 1924--1930, 2013.

\bibitem{Thakur2014b}
D.~Thakur and M.~R. Akella, ``Gyro-free rigid-body attitude stabilization using
  only vector measurements,'' \emph{AIAA Journal of Guidance, Control, and
  Dynamics}, pp. 1--8, 2014.

\bibitem{Benziane2014}
L.~Benziane, A.~Benallegue, and A.~Tayebi, ``Attitude stabilization without
  angular velocity measurements,'' in \emph{Proceedings of IEEE International
  Conference on Robotics \& Automation}, Hong Kong, China, 2014, pp.
  3116--3121.

\bibitem{Tayebi2013}
A.~Tayebi, A.~Roberts, and A.~Benallegue, ``Inertial vector measurements based
  velocity-free attitude stabilization,'' \emph{IEEE Transactions on Automatic
  Control}, vol.~58, no.~11, pp. 2893--2898, November 2013.

\bibitem{Wahba1965}
G.~Wahba, ``A least squares estimate of satellite attitude,'' \emph{SIAM
  Review}, vol.~7, no.~3, p. 409, 1965.

\bibitem{Shuster1981}
M.~Shuster and S.~Oh, ``Three-axis attitude determination from vector
  observations,'' \emph{Journal of Guidance and Control}, vol.~4, no.~1, pp.
  70--77, january-february 1981.

\bibitem{markley2000}
F.~L. Markley and D.~Mortari, ``Quaternion attitude estimation using vector
  observations.'' \emph{Journal of the Astronautical Sciences}, vol.~48, no.~2,
  pp. 359--380, 2000.

\bibitem{Pflimlina2007}
J.~M. Pflimlina, T.~Hamel, and P.~Soueres, ``Nonlinear attitude and gyroscope's
  bias estimation for a vtol uav,'' \emph{International Journal of Systems
  Science}, vol.~38, pp. 197--210, 2007.

\bibitem{Batista2011}
P.~Batista, C.~Silvestre, and P.~Oliveira, ``Partial attitude and rate gyro
  bias estimation: observability analysis, filter design, and performance
  evaluation,'' \emph{International Journal of Control}, vol.~84, no.~5, pp.
  895--903, Jul 2011.

\bibitem{Crassidis2007}
J.~Crassidis, F.~Markley, and F.~Cheng, ``Survey of nonlinear attitude
  estimation methods,'' \emph{Journal of guidance, control, and dynamics},
  vol.~30, no.~01, pp. 12--28, January 2007.

\bibitem{Jun1999}
M.~Jun, S.~Roumeliotis, and G.~Sukhatme, ``State estimation of an autonomous
  helicopter using kalman filtering,'' in \emph{Proceedings IEEE/RSJ
  International Conference on Intelligent Robots and Systems}, vol.~3, 1999,
  pp. 1346 -- 1353.

\bibitem{El2009}
A.~El~Hadri and A.~Benallegue, ``Attitude estimation with gyros-bias
  compensation using low-cost sensors,'' in \emph{Joint 48th IEEE Conference on
  Decision and Control and 28th Chinese Control Conference, Shanghai, P.R.
  China}, December 16-18 2009, pp. 8077--8082.

\bibitem{Crassidis2003}
J.~Crassidis and M.~F.L., ``Unscented filtering for spacecraft attitude
  estimation,'' \emph{Journal of guidance, control, and dynamics}, vol.~26,
  no.~4, pp. 536--542, 2003.

\bibitem{Shuster1993}
M.~Shuster, ``A survey of attitude representations,'' \emph{The Journal of the
  astronautical science}, vol.~41, no.~4, pp. 439--517, October-December 1993.

\bibitem{Euston2008}
M.~Euston, P.~Coote, R.~Mahony, J.~Kim, and T.~Hamel, ``A complementary filter
  for attitude estimation of a fixed-wing uav,'' in \emph{IEEE/RSJ
  International Conference on Intelligent Robots and Systems, Acropolis
  Convention Center, Nice, France}, Sept, 22-26 2008, pp. 340--345.

\bibitem{Vasconcelos2009}
J.~Vasconcelos, C.~Silvestre, P.~Oliveira, P.~Batista, and C.~B., ``Discrete
  time-varying attitude complementary filter,'' in \emph{American Control
  Conference Hyatt Regency Riverfront, St. Louis, MO, USA}, June 10-12 2009,
  pp. 4056--4061.

\bibitem{Higgins1975}
J.~W.~T. Higgins, ``A comparison of complementary and kalman filtering,''
  \emph{IEEE Transaction On Aerospace And Electronic Sysytems}, vol. AES-1 1,
  no.~3, pp. 321--325, May 1975.

\bibitem{Martin2010}
P.~Martin and E.~Salaun, ``Design and implementation of a low-cost
  observer-based attitude and heading reference system,'' \emph{Control
  Engineering Practice}, vol.~18, no.~7, pp. 712--722, July 2010.

\bibitem{Mahony2008}
R.~Mahony, T.~Hamel, and P.~J.-M., ``Nonlinear complementary filters on the
  special orthogonal group,'' \emph{IEEE Transactions on Automatic Control},
  vol. 53 , Issue: 5, pp. 1203 -- 1218, June 2008.

\bibitem{Tayebi2011}
A.~Tayebi, A.~Roberts, and A.~Benallegue, ``Inertial measurements based dynamic
  attitude estimation and velocity-free attitude stabilization,'' in
  \emph{American Control Conference, San Francisco, CA, USA}, June 29 - July 01
  2011, pp. 1027--1032.

\bibitem{Kubelka2012}
V.~Kubelka and M.~Reinstein, ``Complementary filtering approach to orientation
  estimation using inertial sensors only,'' in \emph{IEEE International
  Conference on Robotics and Automation, RiverCentre, Saint Paul, Minnesota,
  USA}, May 14-18 2012, pp. 599--605.

\bibitem{Masuya2012}
K.~Masuya, T.~Sugihara, and M.~Yamamoto, ``Design of complementary filter for
  high-fidelity attitude estimation based on sensor dynamics compensation with
  decoupled properties,'' in \emph{IEEE International Conference on Robotics
  and Automation, RiverCentre, Saint Paul, Minnesota, USA}, May 14-18 2012, pp.
  606--611.

\bibitem{Batista2012}
P.~Batista, C.~Silvestre, and P.~Oliveira, ``Sensor-based globally
  asymptotically stable filters for attitude estimation: Analysis, design, and
  performance evaluation,'' \emph{IEEE Transactions on Automatic Control},
  vol.~57, pp. 2095 -- 2100, Aug. 2012.

\bibitem{Shuster2006}
M.~Shuster, ``The triad algorithm as maximum likelihood estimation,''
  \emph{Journal of the Astronautical Sciences}, vol.~54, no.~1, pp. 113--123,
  January-March 2006.

\bibitem{Benziane2012}
L.~Benziane, A.~Benallegue, and A.~El-Hadri, ``A globally asymptotic attitude
  estimation using complementary filtering,'' in \emph{Proceedings of IEEE
  International Conference on Robotics and Biomimetics}, Guangzhou, China,
  2012, pp. 878--883.

\bibitem{3DRa}
3DR. (2015, Jan) http://copter.ardupilot.com/.

\bibitem{Horn1991}
R.~A. Horn and C.~R. Johnson, \emph{Topics in Matrix Analysis}.\hskip 1em plus
  0.5em minus 0.4em\relax Cambridge University Press, 1991.

\bibitem{Benziane2015}
L.~Benziane, A.~Benallegue, Y.~Chitour, and A.~Tayebi, ``Inertial vector based
  attitude stabilization of rigid body without angular velocity measurements,''
  \emph{arXiv:1501.04767 [math.OC]}, 2015.

\bibitem{3DR}
3DR. (2015, Jan) http://store.3drobotics.com/. Berkeley, USA.

\end{thebibliography}

\end{document}